\newcommand{\bs}{\backslash}
\newcommand{\N}{\mathbb{N}}
\newcommand{\R}{\mathbb{R}}
\newcommand{\C}{\mathbb{C}}
\newcommand{\cl}{\operatorname{cl}}
\newcommand{\inter}{\operatorname{int}}
\newcommand{\geA}{\ge_{\mathcal{A}}}
\newcommand{\ngeA}{\ngeq_{\mathcal{A}}}
\newcommand{\gA}{>_{\mathcal{A}}}
\newcommand{\lA}{<_{\mathcal{A}}}
\theoremstyle{plain}
\newtheorem{theorem}{Theorem}[section]
\newtheorem{corollary}[theorem]{Corollary}
\newtheorem{lemma}[theorem]{Lemma}
\theoremstyle{definition}
\newtheorem{definition}[theorem]{Definition}
\newtheorem{remark}[theorem]{Remark}
\newtheorem{example}[theorem]{Example}
\newtheorem{question}[theorem]{Question}
\numberwithin{equation}{section}
\begin{document}

\title[The Berger-Wang formula for maps on cones]{The Berger-Wang formula for order-preserving homogeneous maps on cones}
\author[B. Lins, A. Peperko]{Brian Lins, Aljo\v{s}a Peperko}
\date{}
\address{Brian Lins, Hampden-Sydney College}
\email{blins@hsc.edu}
\address{Aljo\v{s}a Peperko, University of Ljubljana}
\email{Aljosa.Peperko@fs.uni-lj.si}
\subjclass[2010]{Primary 47H07, 47J10; Secondary 15A80, 15B48}
\keywords{Joint spectral radius, generalized spectral radius, order-preserving homogeneous maps, cones}


\begin{abstract}
We prove that the joint spectral radius and generalized spectral radius are equal for any bounded, equicontinuous family of order-preserving, homogeneous maps on a polyhedral cone.  We also consider conditions which guarantee that the semigroup generated by a family of order-preserving, homogeneous maps is bounded when its generalized spectral radius $r(\mathcal{A}) = 1$.  Finally, we extend the notions of joint and generalized spectral subradii to the setting of homogeneous maps on wedges.  
\end{abstract}

\maketitle

Rota and Strang \cite{RotaStrang60} first introduced the joint spectral radius for pairs of $n$-by-$n$ matrices.  Later Daubechies and Lagarias \cite{DaubechiesLagarias92} introduced the generalized spectral radius and conjectured that the generalized spectral radius and the joint spectral radius are equal for any finite set of matrices.  This was proved by Berger and Wang \cite{BergerWang92} not just for finite sets of matrices, but also bounded sets.  

Recently there has been interest in nonlinear generalizations of these results.  The joint and generalized spectral radius have been studied in the max algebra setting \cite{Peperko08, Peperko11, Lur06, LurYang09, GMW18}.  More recently \cite{AkianGaubertMarchesini25} introduced the competitive spectral radius for pairs of families of nonexpansive mappings.  The joint spectral radius we consider here can be seen as a one-player analogue of the competitive spectral radius.  We are not the first to consider the joint and generalized spectral radii for families of order-preserving, homogeneous maps on cones.  Motivated in part by neural networks, \cite{DeiddaGuglielmiTudisco25} have generalized the joint spectral radius to order-preserving subhomogeneous maps on cones. 
They also consider both the joint and generalized spectral radii of order-preserving homogeneous maps. In \cite[Corollary 5.22]{DeiddaGuglielmiTudisco25} they prove that the joint and generalized spectral radii are equal for families of continuous, order-preserving, homogeneous maps on a closed cone in a finite dimensional normed space if the maps all have eigenvectors in a compact subset of the interior of the cone.  

The main result of this paper is to prove the equivalence of the joint and generalized spectral radii for any bounded, equicontinuous family of order-preserving, homogeneous maps on a polyhedral cone.  As a corollary, we prove the continuity of the joint spectral radius for compact families with respect to the Hausdorff metric in Section \ref{sec:continuity}.  In Section \ref{sec:bounded} we introduce irreducibility and primitivity conditions which can guarantee that the semigroup generated by a family of order-preserving, homogeneous maps is bounded.  In Section \ref{sec:norms} we prove the existence of extremal Barabanov norms for irreducible families of subadditive, order-preserving, homogeneous maps on the standard cone in $\R^n$.  Finally, in Section \ref{sec:subradii} we extend the notions of joint and generalized spectral subradii to homogeneous maps on wedges.

\section{Preliminaries} \label{sec:prelim}
 
Let $(X,\|\cdot\|)$ be a real normed space. For any set $S$, let $\inter S$ denote the interior and $\cl S$ denote the closure of $S$. A convex set $W \subseteq X$ is a \emph{wedge} if $\lambda W \subseteq W$ for all real $\lambda > 0$.  A wedge $K \subset X$ is a \emph{cone} if $K \cap (-K) = \{0\}$. A cone is \emph{solid} if it has nonempty interior and \emph{polyhedral} if it has a finite number of extreme rays.  Any closed cone $K$ defines a partial order on $X$ where $x \ge y$ if and only if $x-y \in K$. We write $x \gg y$ if $x - y \in \inter K$. A closed cone $K$ is \emph{normal} if there is a constant $M > 0$ such that $\|x\| \le M \| y \|$ whenever $x, y \in K$ and $x \le y$. Note that any closed cone in a finite dimensional normed space is normal \cite[Lemma 1.2.5]{LemmensNussbaum}.  A norm $\|\cdot \|$ is \emph{monotone} if $\|x\| \le \|y\|$ whenever $0 \le x \le y$. 

A closed cone $K$ has \emph{condition $\mathsf{G}$} if for every $x \in K$ and $c < 1$, there is a $\delta > 0$ such that $y \ge c x$ for all $y \in K$ with $\|x - y\| < \delta$.  A closed cone is polyhedral if and only if is has condition $\mathsf{G}$ \cite[Lemma 3.3, Lemma 5.1.4]{BurbanksNussbaumSparrow03, LemmensNussbaum}.

Given a domain $D \subseteq X$, we say that a map $f:D \rightarrow X$ is \emph{order-preserving} if $f(x) \ge f(y)$ whenever $x \ge y$. We say that $f$ is \emph{(positively) homogeneous} if $f(cx) = c f(x)$ for all $c > 0$ and $x \in D$. 

Two elements $x, y$ in a closed cone $K$ are \emph{comparable} if there are positive constants $\alpha, \beta$ such that $\alpha x \le y \le \beta x$.  Comparability is an equivalence relation on $K$, and the equivalence classes are called the \emph{parts} of $K$. We consider the parts $\inter K$ and $\{0\}$ to be trivial, all other parts are non-trivial. Note that a closed cone is polyhedral if and only if it has a finite number of parts. The closures of the parts are the faces of the polyhedral cone.

An important polyhedral cone is the \emph{standard cone} in $\R^n$:
$$\R^n_{\ge 0} := \{x \in \R^n : x_i \ge 0 \text{ for all } i \in [n] \}.$$
The interior of the standard cone is 
$$\R^n_{> 0} := \{x \in \R^n : x_i > 0 \text{ for all } i \in [n] \}.$$
Of particular interest are \emph{multiplicatively topical} maps which are continuous, order-preserving, homogeneous maps $f: \R^n_{\ge 0} \rightarrow \R^n_{\ge 0}$ such that $f(\R^n_{>0}) \subseteq \R^n_{>0}$. When working with $\R^n$, we let $e_1, \ldots, e_n \in \R^n$ denote the standard basis vectors and $\mathbf{1} := \sum_{i = 1}^n e_i$ is the vector with all entries equal to one. 

Let $X$ be real normed vector space and $W\subset X$ a non-zero wedge.
The norm of a homogeneous map $f:W \rightarrow X$ on a wedge $W \subseteq X$ is 
$$\|f\|_W := \sup \{\|f(x)\| : x \in W, \|x\| = 1 \}.$$ 
When the domain of the function is understood, we omit the subscript and just write $\|f\|$.  A homogeneous map $f$ is \emph{bounded} if $\|f\| < \infty$.

Let $f:W \to W$ be homogeneous and bounded.
It is easy to see that
$\|f^{m+n}\|\le \|f^m\|\cdot \|f^n\|$ for all $m,n\in\mathbb{N}$. By Fekete's subadditivity lemma it follows that the limit $\lim_{n\to\infty}\|f^n\|^{1/n}$ exists and is equal to $\inf_{n>0} \|f^n\|^{1/n}$. The \emph{(Bonsall) cone spectral radius of $f$} is the limit
$$r(f) := \lim_{n \rightarrow \infty} \|f^n\|^{1/n} = \inf_{n > 0} \|f^n\|^{1/n}.$$
When $K$ is a normal closed cone with nonempty interior in a real Banach space and $f:K \rightarrow K$ is continuous, homogeneous, and order-preserving, it is known that for any $x \in \inter K$, $r(f) = \lim_{n \rightarrow \infty} \|f^n(x) \|^{1/n}$ \cite[Theorem 2.2]{MalletParetNussbaum02}.  When $K$ is a closed cone in a finite dimensional real normed space and $f: K \rightarrow K$ is continuous, order-preserving, and homogeneous, there is always a nonzero eigenvector $x \in K$ with eigenvalue $r(f)$, that is, $f(x) = r(f) x$ \cite[Corollary 5.4.2]{LemmensNussbaum}.  


Let $\mathcal{A}$ be a set of homogeneous maps on a domain $D \subseteq X$.  The family $\mathcal{A}$ is \emph{bounded} if $\{ \|f\| : f \in \mathcal{A} \}$ is bounded and $\mathcal{A}$ is \emph{equicontinuous} if it is pointwise equicontinuous on $D$.  

If $D \subset X$ is compact and $X$ is a real Banach space, then any continuous, homogeneous map $f: D \rightarrow X$ is also an element of the Banach space $C( D, X)$ which consists of continuous functions from $D$ into $X$.  The norm on $C(D, X)$ is 
$$\|f\| := \sup_{x \in D} \|f(x) \|.$$
If $K \subset X$ is a closed cone in a finite dimensional normed space, then by the Arzel\`{a}-Ascoli theorem, a closed and bounded family $\mathcal{A}$ of continuous, homogeneous functions $f: K \rightarrow K$ is equicontinuous if and only if it is compact as a family of maps in  $C(\Omega,X)$ for every compact $\Omega \subset K$. 

Let $\mathcal{A}$ be a family of functions $f: Y \rightarrow Z$ and let $\mathcal{B}$ be a family of functions $f:X \rightarrow Y$ for some sets $X, Y, Z$. Then the \emph{composition} of the two families is
$$\mathcal{AB} := \{f \circ g : f \in \mathcal{A}, g \in \mathcal{B} \}.$$
When working with a family $\mathcal{A}$ of maps $f: D \rightarrow D$, we adopt the following notational conventions.  For each $m \in \N$, we let 
$$\mathcal{A}^m := \{ f_m \circ \cdots \circ f_1 : f_i \in \mathcal{A} \}.$$
We use $\mathcal{A}^+$ and $\mathcal{A}^*$ to denote the semigroup and monoid generated by $\mathcal{A}$: 
$$\mathcal{A}^+ := \bigcup_{m > 0} \mathcal{A}^m, \text{ and } \mathcal{A}^* := \bigcup_{m \ge 0} \mathcal{A}^m.$$

The \emph{joint spectral radius} of $\mathcal{A}$ is the quantity $\hat{r}(\mathcal{A})$ defined in the following lemma. 

\begin{lemma} \label{lem:joint}
Let $W$ be a wedge in a real normed space.
For any bounded family $\mathcal{A}$ of homogeneous maps on $W$, the joint spectral radius of $\mathcal{A}$ is
$$\hat{r}(\mathcal{A}) := \inf_{m > 0} \sup_{f \in \mathcal{A}^m} \| f\|^{1/m} = \lim_{m \rightarrow \infty} \sup_{f \in \mathcal{A}^m} \| f\|^{1/m}.$$
\end{lemma}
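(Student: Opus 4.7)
The plan is to reduce the assertion to Fekete's subadditivity lemma, exactly as the excerpt does for a single map $f$. Setting $a_m := \sup_{f \in \mathcal{A}^m} \|f\|$, the goal is to establish the submultiplicativity estimate
$$a_{m+n} \le a_m \cdot a_n \qquad (m, n \ge 1);$$
Fekete's lemma applied to the subadditive sequence $\log a_m$ then yields the equality of $\inf_{m>0} a_m^{1/m}$ and $\lim_{m \to \infty} a_m^{1/m}$, which is precisely the asserted formula for $\hat{r}(\mathcal{A})$.

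For the submultiplicativity step, any $f \in \mathcal{A}^{m+n}$ factors as $f = g \circ h$ with $g \in \mathcal{A}^n$ and $h \in \mathcal{A}^m$. The composition-norm inequality $\|g \circ h\| \le \|g\| \cdot \|h\|$ for bounded homogeneous maps on a wedge (already invoked by the excerpt for iterates of a single map) yields $\|f\| \le \|g\| \cdot \|h\|$, and taking the supremum over all such decompositions gives $a_{m+n} \le a_n \cdot a_m$. The underlying composition-norm inequality is the one-line observation that for $x \in W$ with $\|x\| = 1$, either $h(x) = 0$, in which case $g(h(x)) = 0$ by homogeneity, or else $h(x)/\|h(x)\|$ is a unit vector in $W$ and homogeneity of $g$ extracts the factor $\|h(x)\| \le \|h\|$. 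Extending this from iterates to compositions of distinct maps requires no new idea.

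Finiteness is automatic: boundedness of $\mathcal{A}$ gives $a_1 < \infty$, and submultiplicativity then forces $a_m \le a_1^m < \infty$ for every $m$, so Fekete's lemma is applicable in the generic case where every $a_m$ is strictly positive. In the degenerate case $a_{m_0} = 0$ for some $m_0$, the estimate $a_{m_0 + j} \le a_{m_0} \cdot a_j = 0$ forces $a_m = 0$ for all $m \ge m_0$, so both the infimum and the limit reduce to zero and the identity holds trivially.

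I do not anticipate a genuine obstacle here: this is a routine family-level refinement of the single-map fact already recorded in the excerpt, and the only mild care needed is the handling of the degenerate case $a_{m_0} = 0$ and the zero-value of $h(x)$ in verifying the composition-norm inequality.
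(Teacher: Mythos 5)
Your proof is correct and follows essentially the same route as the paper: set $\alpha_m = \sup_{f\in\mathcal{A}^m}\|f\|$, verify submultiplicativity $\alpha_{m+n}\le\alpha_m\alpha_n$ via the composition-norm inequality, and apply Fekete's lemma. Your extra care with the degenerate cases ($h(x)=0$, or $\alpha_{m_0}=0$ for some $m_0$) is a reasonable refinement that the paper leaves implicit.
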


\begin{proof}
Let $\alpha_m = \sup_{f \in \mathcal{A}^m} \|f\|$.  
For any $m, n \in \N$, observe that 
\begin{align*} 
\alpha_{m+n} &= \sup_{f \in \mathcal{A}^m, g \in \mathcal{A}^n} \sup_{x \in W, \|x\|=1} \|f(g(x))\| \\
&\le \sup_{f \in \mathcal{A}^m, g \in \mathcal{A}^n} \|f\| \|g\| = \alpha_m \alpha_n. 
\end{align*}
Therefore, by Fekete's lemma, the sequence $\alpha_m^{1/m}$ converges, and $\inf_{m > 0} \alpha_m^{1/m} = \lim_{m \rightarrow \infty} \alpha_m^{1/m}$. 
\end{proof}

We also define the \emph{generalized spectral radius} of $\mathcal{A}$ as follows. 

\begin{lemma} \label{lem:generalized}
Let $\mathcal{A}$ be a bounded family of homogeneous maps on a wedge $W$ in a real normed space. The generalized spectral radius of $\mathcal{A}$ is 
$$r(\mathcal{A}):=\limsup_{n \rightarrow \infty} \sup_{f \in \mathcal{A}^n} r(f)^{1/n} = \sup_{m > 0} \sup_{f \in \mathcal{A}^m} r(f)^{1/m}.$$
\end{lemma}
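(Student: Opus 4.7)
The plan is to show that both expressions equal $\sup_m \beta_m^{1/m}$ where $\beta_m := \sup_{f \in \mathcal{A}^m} r(f)$, using only two ingredients: the identity $r(f^k) = r(f)^k$ for bounded homogeneous $f$, and the inclusion $(\mathcal{A}^m)^k \subseteq \mathcal{A}^{mk}$.

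First I would verify that each $\beta_m$ is finite. Since $\mathcal{A}$ is bounded, $M := \sup_{g \in \mathcal{A}} \|g\| < \infty$, and for any $f = f_m \circ \cdots \circ f_1 \in \mathcal{A}^m$ the submultiplicativity of the norm gives $\|f\| \le M^m$; then $r(f) = \inf_n \|f^n\|^{1/n} \le \|f\| \le M^m$, so $\beta_m \le M^m$ and in particular $\beta_m^{1/m} \le M$ for every $m$.

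Next I would establish the homogeneity relation $r(f^k) = r(f)^k$. This follows directly from the already-proven formula $r(f) = \lim_n \|f^n\|^{1/n}$: indeed,
$$r(f^k) = \lim_{n \to \infty} \|f^{nk}\|^{1/n} = \left(\lim_{n \to \infty} \|f^{nk}\|^{1/(nk)}\right)^k = r(f)^k.$$
Now fix $m$ and any $f \in \mathcal{A}^m$. Writing $f = f_m \circ \cdots \circ f_1$ with $f_i \in \mathcal{A}$ makes clear that $f^k \in \mathcal{A}^{mk}$, so $r(f)^k = r(f^k) \le \beta_{mk}$, whence $r(f)^{1/m} \le \beta_{mk}^{1/(mk)}$. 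Taking the supremum over $f \in \mathcal{A}^m$ yields the monotonicity-along-multiples inequality
$$\beta_m^{1/m} \le \beta_{mk}^{1/(mk)} \qquad \text{for all } m, k \ge 1.$$

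Letting $k \to \infty$ along this subsequence gives $\beta_m^{1/m} \le \limsup_{n \to \infty} \beta_n^{1/n}$, and taking the supremum over $m$ then yields
$$\sup_{m > 0} \beta_m^{1/m} \le \limsup_{n \to \infty} \beta_n^{1/n}.$$
The reverse inequality is immediate from $\limsup_n \beta_n^{1/n} \le \sup_n \beta_n^{1/n}$, so both quantities coincide, which is the claimed identity. There is no real obstacle here; the only thing to be careful about is distinguishing the role of $m$ (used to form $f$) from the role of $k$ (used to take powers of $f$) when deducing the key inequality $\beta_m^{1/m} \le \beta_{mk}^{1/(mk)}$.
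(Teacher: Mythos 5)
Your proposal is correct and follows essentially the same route as the paper: the key step in both is that $f \in \mathcal{A}^m$ implies $f^k \in \mathcal{A}^{mk}$ together with $r(f^k) = r(f)^k$, giving $\sup_{f \in \mathcal{A}^m} r(f)^{1/m} \le \sup_{g \in \mathcal{A}^{mk}} r(g)^{1/(mk)}$, after which one bounds the $\limsup$ from below along the subsequence of multiples of $m$. The extra details you supply (finiteness of $\beta_m$ and the explicit derivation of $r(f^k)=r(f)^k$ from $r(f)=\lim_n\|f^n\|^{1/n}$) are fine and merely make explicit what the paper leaves as "clear."
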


\begin{proof}
Clearly 
$$\limsup_{n \rightarrow \infty} \sup_{f \in \mathcal{A}^n} r(f)^{1/n} \le \sup_{m > 0} \sup_{f \in \mathcal{A}^m} r(f)^{1/m}.$$ 
It is clear from the definition of the cone spectral radius that $r(f^n) = r(f)^n$ for any bounded, homogeneous map $f: W \rightarrow W$.  For any $m, n \in \N$, 
$$\sup_{g \in \mathcal{A}^{mn}} r(g)^{1/(mn)} \ge \sup_{f \in \mathcal{A}^m} r(f^n)^{1/(mn)} = \sup_{f \in \mathcal{A}^m} r(f)^{1/m}.$$
Then for any $m \in \N$, 
$$\limsup_{n \rightarrow \infty} \sup_{f \in \mathcal{A}^n} r(f)^{1/n} \ge \limsup_{n \rightarrow \infty} \sup_{g \in \mathcal{A}^{mn}} r(g)^{1/(mn)} \ge \sup_{f \in \mathcal{A}^m} r(f)^{1/m}.$$
This proves that
$$\limsup_{n \rightarrow \infty} \sup_{f \in \mathcal{A}^n} r(f)^{1/n} = \sup_{m > 0} \sup_{f \in \mathcal{A}^m} r(f)^{1/m}.$$
\end{proof}

In the sequel, we will also need to consider the \emph{partial joint spectral radius}. 

\begin{lemma} \label{lem:partial}
Let $K$ be a closed cone in a finite dimensional real normed space. Let $\mathcal{A}$ be a bounded family of continuous, homogeneous, order-preserving maps on $K$ and let $P$ be a part of $K$. The partial joint spectral radius of $\mathcal{A}$ on $P$ is 
$$\hat{r}(\mathcal{A},P) := \limsup_{m \rightarrow \infty} \sup_{f \in \mathcal{A}^m} \|f\|^{1/m}_P = \limsup_{m \rightarrow \infty} \sup_{f \in \mathcal{A}^m} \|f(x)\|^{1/m}$$
for every $x \in P$.  In particular, if $P = \inter K$, then
$$\hat{r}(\mathcal{A}) = \hat{r}(\mathcal{A}, P) = \lim_{m \rightarrow \infty} \sup_{f \in \mathcal{A}^m} \|f(x)\|^{1/m}$$
for every $x \in \inter K$. 
\end{lemma}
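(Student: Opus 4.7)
The plan is to extract a single uniform comparison: for any $x\in P$, there is a constant $C(x)>0$ such that
$$\frac{\|f(x)\|}{\|x\|} \;\le\; \|f\|_P \;\le\; C(x)\,\|f(x)\|$$
for every continuous, order-preserving, homogeneous $f\colon K\to K$. Granted this sandwich, taking $\sup_{f\in\mathcal{A}^m}$, raising to the $1/m$-power, and passing to the limsup in $m$ collapses the factors $\|x\|^{1/m}$ and $C(x)^{1/m}$ to $1$, yielding $\limsup_m\sup_f\|f\|_P^{1/m}=\limsup_m\sup_f\|f(x)\|^{1/m}$ for every $x\in P$. The left-hand inequality of the sandwich is immediate because $x/\|x\|\in P$ has unit norm.

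The content lies in the right-hand bound. I would identify $P$ with the relative interior of its closure $F:=\cl P$, which is a face of $K$. Since $F$ is a closed convex cone and $x$ lies in the relative interior of $F$, there exists $\epsilon>0$ such that $x-\epsilon y\in F\subseteq K$ for every $y\in F$ with $\|y\|\le 1$; equivalently, $\epsilon y\le x$ in the cone order. Order-preservation and homogeneity of $f$ then give $\epsilon f(y)\le f(x)$, and normality of $K$ (with some constant $M$, automatic in finite dimensions) upgrades this to $\|f(y)\|\le (M/\epsilon)\|f(x)\|$. Taking the supremum over $y\in P$ with $\|y\|=1$ produces the sandwich with $C(x)=M/\epsilon$.

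For the final assertion with $P=\inter K$, I would first verify $\|f\|_{\inter K}=\|f\|$. Fixing any $z\in\inter K$, any $y\in K$ with $\|y\|=1$ is approximated by $y_n:=y+z/n\in\inter K$ with $\|y_n\|\to 1$; continuity of $f$ then yields $\|f(y)\|\le\|f\|_{\inter K}$, so the two norms agree. Hence $\hat r(\mathcal A,\inter K)=\limsup_m\sup_f\|f\|^{1/m}$, which by Lemma~\ref{lem:joint} is already a genuine limit and equals $\hat r(\mathcal A)$. Applying the sandwich with $\|f\|$ in place of $\|f\|_P$ now squeezes $\sup_f\|f(x)\|^{1/m}$ between two sequences converging to $\hat r(\mathcal A)$, so the limsup is actually a limit equal to $\hat r(\mathcal A)$ for every $x\in\inter K$.

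The step I expect to require the most care is the construction of the uniform $\epsilon$: it relies on recognizing the part $P$ as the relative interior of the face $\cl P$ and invoking the (finite-dimensional) fact that a relatively interior point of a closed convex cone absorbs the whole face into some scalar multiple of itself. This is also the only place the finite-dimensional hypothesis is genuinely used.
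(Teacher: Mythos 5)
Your proposal is correct and takes essentially the same route as the paper's own proof: the same $\epsilon$-absorption of unit vectors of $P$ by the point $x$ (the paper phrases this via parts being relatively open convex sets, you via relative interiors of faces), normality of $K$ in finite dimensions, and the resulting two-sided comparison $\epsilon\|f\|_P \le M\|f(x)\| \le M\|f\|_P\|x\|$ whose constants wash out after taking $m$-th roots. Your explicit density argument for $\|f\|_{\inter K}=\|f\|$ is a small addition the paper leaves implicit, but it does not change the approach.
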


\begin{proof}
Choose any $x \in P$. Since parts are relatively open convex sets \cite[1.2.2]{LemmensNussbaum}, there is an $\epsilon > 0$ small enough so that $x - \epsilon y \in P$ for every $y \in P$ with $\|y\| = 1$.  Then $x \ge \epsilon y$ for every $y \in P$ with $\|y\|=1$.  Since $K$ is a finite dimensional cone, it must be normal which means that there is a constant $M > 0$ such that whenever $0 \le v \le w$, $\|v\| \le M \|w\|$. In particular, 
$$\|f(\epsilon y)\| \le M \|f(x)\|$$
for every $y \in P$ with $\|y \| =1$ and any order-preserving, homogeneous map $f$.  
Therefore 
\begin{equation} \label{eq:partial}
\epsilon\|f\|_P \le M \|f(x)\| \le M \|f\|_P  \|x\|
\end{equation}
for every $f \in \mathcal{A}^+$.  From this, we conclude that 
$$\limsup_{m \rightarrow \infty} \sup_{f \in \mathcal{A}^m} \|f\|^{1/m}_P = \limsup_{m \rightarrow \infty} \sup_{f \in \mathcal{A}^m} \|f(x)\|^{1/m}.$$
If $P = \inter K$, then $\|f\|_P = \|f\|$ for any continuous, order-preserving, homogeneous map on $K$, so $\hat{r}(\mathcal{A}) = \hat{r}(\mathcal{A},P)$ and both sequences $\sup_{f \in \mathcal{A}^m} \|f\|_P^{1/m}$ and $\sup_{f \in \mathcal{A}^m} \|f(x)\|^{1/m}$ converge by \eqref{eq:partial} and Lemma \ref{lem:joint}.  
\end{proof}

We will also need the following minor technical lemma.  
\begin{lemma} \label{lem:composition}
Let $K$ be a closed cone in a finite dimensional normed space $X$. Let $\mathcal{A}$ and $\mathcal{B}$ be bounded equicontinuous families of homogeneous maps $f: K \rightarrow K$.  Then the family $\mathcal{AB}$ is equicontinuous.  
\end{lemma}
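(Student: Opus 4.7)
The plan is to prove equicontinuity of $\mathcal{AB}$ at an arbitrary point $x_0 \in K$ by combining the equicontinuity of $\mathcal{B}$ at $x_0$ with a uniform form of equicontinuity for $\mathcal{A}$ on a compact set containing all the intermediate images $g(x_0)$ for $g \in \mathcal{B}$. Specifically, set $S := \{g(x_0) : g \in \mathcal{B}\}$. Since $\mathcal{B}$ is bounded and $g$ is homogeneous, $\|g(x_0)\| \le \|g\|\|x_0\|$ is uniformly bounded, so $S$ is bounded in the finite-dimensional space $X$, hence $\cl S$ is compact and contained in $K$.

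The key observation is that the family $\mathcal{A}$, being pointwise equicontinuous on the compact set $\cl S$, is in fact ``uniformly equicontinuous near $\cl S$'' in the following sense: for every $\epsilon>0$ there is a $\delta_2>0$ such that for every $z \in \cl S$, every $u \in K$ with $\|u-z\| < \delta_2$, and every $f \in \mathcal{A}$, one has $\|f(u)-f(z)\| < \epsilon$. This is a standard covering argument: at each $z \in \cl S$, equicontinuity of $\mathcal{A}$ produces a radius $\delta_z$ working for $\epsilon/2$; the balls $B(z, \delta_z/2)$ cover the compact set $\cl S$; pick a finite subcover, let $\delta_2$ be the minimum of the $\delta_{z_i}/2$, and combine by the triangle inequality.

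Once this uniform estimate for $\mathcal{A}$ is in hand, apply equicontinuity of $\mathcal{B}$ at $x_0$ to find $\delta_1 > 0$ such that $\|y-x_0\|<\delta_1$ implies $\|g(y)-g(x_0)\| < \delta_2$ for every $g \in \mathcal{B}$. Since $g(x_0) \in \cl S$, the uniform estimate for $\mathcal{A}$ yields $\|f(g(y))-f(g(x_0))\| < \epsilon$ for every $f \in \mathcal{A}$, $g \in \mathcal{B}$, which is exactly equicontinuity of $\mathcal{AB}$ at $x_0$.

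The only step that requires any real thought is the uniform equicontinuity of $\mathcal{A}$ near $\cl S$; everything else is a direct chaining. The hypothesis that $X$ is finite dimensional enters precisely once, namely to guarantee that the bounded set $S$ has compact closure so that the covering argument applies. Boundedness of $\mathcal{A}$ is not explicitly used in the equicontinuity argument, but it is implicit in the statement; no homogeneity of $f \circ g$ needs to be invoked beyond the trivial observation that compositions of homogeneous maps are homogeneous, which the statement takes for granted.
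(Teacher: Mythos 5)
Your proof is correct and follows essentially the same route as the paper's: both uniformize the equicontinuity of $\mathcal{A}$ on a compact set containing the intermediate images under $\mathcal{B}$ (compactness coming from boundedness of $\mathcal{B}$ plus finite dimensionality) and then chain with the equicontinuity of $\mathcal{B}$. The only cosmetic difference is that the paper works uniformly on $B_1 \cap K$ and extends to all of $K$ by homogeneity at the end, whereas you argue pointwise at each $x_0$ (making the covering argument explicit), which lets you dispense with that final homogeneity step.
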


\begin{proof}
For any $R \ge 0$, let $B_R := \{x \in X : \|x\| \le R \}$.  
Since $\mathcal{B}$ is bounded, there is a constant $M>0$ such that $g(B_1 \cap K) \subseteq B_M \cap K$ for all $g \in \mathcal{B}$. 
Since $\mathcal{A}$, $\mathcal{B}$ are equicontinuous, they are uniformly equicontinuous on the compact set $B_M \cap K$.
For any $\epsilon > 0$, there exists $\delta > 0$ such that 
$\|f(x) - f(y)\| \le \epsilon$ for any $x, y \in B_M \cap K$ with $\|x-y\| \le \delta$.  
At the same time, there exists $\gamma > 0$ such that 
$\|g(x) - g(y)\| \le \delta$ for any $x, y \in B_1 \cap K$ with $\|x - y\| \le \gamma$.  
Therefore, 
$$\|f(g(x)) - f(g(y)) \| \le \epsilon$$
for any $x, y \in B_1 \cap K$ with $\|x-y\| \le \gamma$. 
This proves that $\mathcal{AB}$ is uniformly equicontinuous on $B_1 \cap K$.  It follows by homogeneity that $\mathcal{AB}$ is equicontinuous on $K$. 
\end{proof}

\section{A Berger-Wang Formula} \label{sec:BW}

\begin{theorem} \label{thm:main}
Suppose that $K$ is a closed polyhedral cone in a finite dimensional normed space $X$. If $\mathcal{A}$ is a bounded, equicontinuous family of order-preserving, homogeneous maps on $K$, then 
$$r(\mathcal{A}) = \hat{r}(\mathcal{A}).$$
\end{theorem}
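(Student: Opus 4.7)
The easy direction $r(\mathcal{A}) \le \hat{r}(\mathcal{A})$ follows from $r(f) \le \|f\|$ for every bounded homogeneous $f$: for $f \in \mathcal{A}^m$, $r(f)^{1/m} \le \|f\|^{1/m}$, so taking suprema over $\mathcal{A}^m$ and then $m \to \infty$ via Lemmas \ref{lem:joint} and \ref{lem:generalized} yields the inequality.

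For the reverse, my plan is to exploit the finite part structure of the polyhedral cone $K$. Let $\mathcal{P}$ denote the (finite) collection of nonzero parts of $K$. Because comparable elements have comparable images, each order-preserving homogeneous $f$ induces a map $\sigma(f): \mathcal{P} \to \mathcal{P}$, and $\sigma(\mathcal{A}^+)$ is a subsemigroup of the finite monoid $\mathcal{P}^{\mathcal{P}}$. Since $\|f\| = \max_{P \in \mathcal{P}} \|f\|_P$ and $\limsup$ commutes with $\max$ over a finite index set, one obtains $\hat{r}(\mathcal{A}) = \max_{P \in \mathcal{P}} \hat{r}(\mathcal{A}, P)$. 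Fix a part $P^*$ attaining this maximum, set $\rho := \hat{r}(\mathcal{A}, P^*)$, and pick $x \in P^*$. By Lemma \ref{lem:partial}, there exist products $f_m = g_{k_m} \circ \cdots \circ g_1 \in \mathcal{A}^{k_m}$ with $\|f_m(x)\|^{1/k_m} \to \rho$.

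The strategy is to extract from each $f_m$ a sub-product $h_m \in \mathcal{A}^{\ell_m}$ that preserves a face $\cl R$ (the closure of some part), and then apply Corollary 5.4.2 of Lemmens--Nussbaum to $h_m|_{\cl R}$ (viewed as a self-map of the polyhedral subcone $\cl R$) to obtain an eigenvector $v_m \in \cl R$ with eigenvalue $\lambda_m = r(h_m|_{\cl R}) \le r(h_m)$. Provided we can establish $\lambda_m^{1/\ell_m} \ge \rho - o(1)$, we conclude $r(\mathcal{A}) \ge r(h_m)^{1/\ell_m} \ge \rho - o(1) \to \rho$, completing the proof. To produce $h_m$, I track the part-trajectory $P_0 = P^*, P_1, \ldots, P_{k_m}$, where $P_i$ is the part of $g_i \circ \cdots \circ g_1(x)$, and by pigeonhole in $\mathcal{P}$ find indices $i_m < j_m$ with $P_{i_m} = P_{j_m} =: R$; then $h_m := g_{j_m} \circ \cdots \circ g_{i_m+1}$ sends $R$ into $R$ and extends continuously to a self-map of $\cl R$.

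The main obstacle is the lower bound $\lambda_m^{1/\ell_m} \ge \rho - o(1)$. Decomposing $f_m = q_m \circ h_m \circ p_m$ with the crude prefix/suffix bounds $\|p_m\| \le M^{i_m}$ and $\|q_m\| \le M^{k_m - j_m}$, where $M := \sup_{g \in \mathcal{A}} \|g\|$, gives $\|h_m\|_R^{1/\ell_m} \to \rho$ only when the prefix and suffix have length $o(k_m)$; this forces a more refined pigeonhole---e.g., passing to a subsequence along which $\sigma(f_m)$ stabilizes (the finite semigroup $\sigma(\mathcal{A}^+)$ contains idempotents), and arranging that $R$ coincides with the trajectory parts near both endpoints. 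Upgrading from the norm estimate $\|h_m\|_R$ to the cone spectral radius $\lambda_m = r(h_m|_{\cl R})$ then requires a further argument: equicontinuity of $\mathcal{A}^{\ell_m}$, preserved under composition by Lemma \ref{lem:composition}, together with Arzel\`a--Ascoli gives convergent subsequential limits of suitably normalized iterates of $h_m$ on $\cl R$, and condition $\mathsf{G}$---equivalent to polyhedrality---ensures that the relevant order relations, and hence the growth rate $\rho^{\ell_m}$, survive passage to the limit so that the Perron eigenvector supplied by Corollary 5.4.2 actually captures this rate.
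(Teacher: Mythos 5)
Your easy direction is fine, and your instinct to exploit the finiteness of the set of parts of the polyhedral cone is the right one, but the hard direction has a genuine gap that you yourself flag and then do not close: the passage from the norm estimate $\|h_m\|_R^{1/\ell_m}\approx\rho$ to the spectral-radius estimate $r(h_m|_{\cl R})^{1/\ell_m}\ge\rho-o(1)$. A single product that maps a part $R$ into itself can have large norm on $R$ and arbitrarily small cone spectral radius: on $\R^2_{\ge 0}$ take $h(x)=(Nx_2,\epsilon x_1)$, which preserves the interior and has $\|h\|\approx N$ but $r(h)=\sqrt{N\epsilon}$. So producing a face-preserving sub-product with large norm proves nothing by itself; relating sustained norm growth of long products to the spectral radius of \emph{some} product is precisely the content of the Berger--Wang phenomenon, and the appeal to Arzel\`a--Ascoli plus condition $\mathsf{G}$ ``ensuring the growth rate survives the limit'' is a hope, not an argument. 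The prefix/suffix length issue you raise is a second, more mechanical gap, but even granting a refined pigeonhole the core difficulty remains untouched.

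For contrast, the paper argues by contradiction with an induction over the finite collection of parts ordered by the preorder $P\geA Q$ (some $f\in\mathcal{A}^*$ maps a point of $P$ above a point of $Q$). After rescaling so that $r(\mathcal{A})<1<\hat{r}(\mathcal{A},P)$ for a part $P$ minimal among the ``bad'' parts, one shows the orbit $\{f(x):f\in\mathcal{A}^{mk}\}$ of $x\in P$ is in fact bounded: if not, any normalized limit point $z$ of the orbit lies in a part $Q$ with $P\gA Q$ \emph{strictly} (Lemma \ref{lem:partiallimit}, which is where condition $\mathsf{G}$ and Lemma \ref{lem:alpha} do the real work), hence $\|f\|_Q\le\beta<1$ for all $f\in\mathcal{A}^m$ with $m$ fixed large, and equicontinuity of $\mathcal{A}^m$ then forces $\|f(y_k)\|<\|y_k\|$ for orbit points $y_k$ whose normalizations approach $z$, contradicting unboundedness; a bounded orbit gives $\hat{r}(\mathcal{A},P)\le 1$. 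The idea you are missing is this strict descent of parts for normalized limits of unbounded orbits when $r(\mathcal{A})<1$ --- it is what converts the finite part structure into an actual induction, without ever needing to locate an eigenvector of a particular sub-product. I would rework your argument around that mechanism.
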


In order to prove Theorem \ref{thm:main}, we collect some preliminary results below.

\begin{lemma} \label{lem:ineq}
Let $W$ be a wedge in a real normed space.  For any bounded family $\mathcal{A}$ of homogeneous maps on $W$, $r(\mathcal{A}) \le \hat{r}(\mathcal{A})$. 
\end{lemma}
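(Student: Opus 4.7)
The plan is to show that for every $m \ge 1$ and every $f \in \mathcal{A}^m$, one has $r(f)^{1/m} \le \hat{r}(\mathcal{A})$; taking the supremum over such $m$ and $f$ then yields the inequality by the formula for $r(\mathcal{A})$ in Lemma \ref{lem:generalized}.

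To carry this out, fix $m \in \N$ and $f \in \mathcal{A}^m$. The key observation is that $f^n \in \mathcal{A}^{mn}$ for every $n \in \N$, so writing $\alpha_k := \sup_{g \in \mathcal{A}^k} \|g\|$ we obtain the chain of inequalities
$$\|f^n\| \le \alpha_{mn}.$$
Taking $n$-th roots and passing to the limit, the left-hand side converges to $r(f)$ by Fekete's lemma, while the right-hand side satisfies $\alpha_{mn}^{1/n} = (\alpha_{mn}^{1/(mn)})^m \to \hat{r}(\mathcal{A})^m$ since the sequence $\alpha_k^{1/k}$ converges to $\hat{r}(\mathcal{A})$ by Lemma \ref{lem:joint}. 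Hence $r(f) \le \hat{r}(\mathcal{A})^m$, i.e.\ $r(f)^{1/m} \le \hat{r}(\mathcal{A})$.

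Taking the supremum over $f \in \mathcal{A}^m$ and then over $m > 0$, and invoking the characterization $r(\mathcal{A}) = \sup_{m > 0} \sup_{f \in \mathcal{A}^m} r(f)^{1/m}$ from Lemma \ref{lem:generalized}, gives $r(\mathcal{A}) \le \hat{r}(\mathcal{A})$.

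There is no real obstacle here: the argument is the standard ``easy direction'' of Berger--Wang, and it uses only the submultiplicativity of norms, homogeneity (to form $f^n$), and the two formulas already established in Lemmas \ref{lem:joint} and \ref{lem:generalized}. No order-preserving hypothesis, no continuity, and no structure on $W$ beyond its being a wedge is required.
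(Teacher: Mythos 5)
Your proof is correct, but it takes a different route from the paper's. The paper's argument is a two-line affair: it uses only the elementary inequality $r(f) \le \|f\|$ for each bounded homogeneous $f$, takes the supremum over $f \in \mathcal{A}^m$, and then compares $\limsup_{m} \sup_{f \in \mathcal{A}^m} r(f)^{1/m}$ (which is $r(\mathcal{A})$) with $\lim_{m} \sup_{f \in \mathcal{A}^m} \|f\|^{1/m}$ (which is $\hat{r}(\mathcal{A})$). You instead exploit the semigroup structure --- the observation that $f^n \in \mathcal{A}^{mn}$ for $f \in \mathcal{A}^m$ --- together with the convergence of $\alpha_k^{1/k}$ from Lemma \ref{lem:joint}, to obtain the pointwise bound $r(f)^{1/m} \le \hat{r}(\mathcal{A})$ for every fixed word $f \in \mathcal{A}^m$. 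That intermediate statement is genuinely stronger than the paper's per-$m$ inequality $\sup_{f} r(f)^{1/m} \le \alpha_m^{1/m}$ (since $\alpha_m^{1/m} \ge \hat{r}(\mathcal{A})$ in general, the paper must still pass to the limit in $m$ to conclude), and it is the form of the ``easy direction'' most often quoted in the Berger--Wang literature. The price is a slightly longer argument; the payoff is that you never need the $\limsup$-versus-$\lim$ comparison, only the supremum characterization of $r(\mathcal{A})$ from Lemma \ref{lem:generalized}. Both proofs use exactly the hypotheses you identify: boundedness, homogeneity, and nothing else.
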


\begin{proof}
For any bounded homogeneous map $f: W \rightarrow W$, $r(f) \le \|f\|$. Therefore 
$$\sup_{f \in \mathcal{A}^m} r(f)^{1/m} \le \sup_{f \in \mathcal{A}^m} \|f\|^{1/m}$$ 
for every $m \in \N$. Then  
$$r(\mathcal{A}) = \limsup_{m \rightarrow \infty} \sup_{f \in \mathcal{A}^m} r(f)^{1/m} \le \lim_{m \rightarrow \infty} \sup_{f \in \mathcal{A}^m} \|f\|^{1/m} = \hat{r}(\mathcal{A}).$$
\end{proof}

\begin{lemma} \label{lem:alpha}
Let $K$ be a closed cone in a real normed space, let $\mathcal{A}$ be a bounded family of homogeneous, order-preserving maps on $K$, and let $m \in \N$. If there exists $f \in  \mathcal{A}^m$,  $x \in K \bs \{0\}$, and $\alpha > 0$ such that $f(x) \ge \alpha^m x$, then $r(\mathcal{A}) \ge \alpha$. 
\end{lemma}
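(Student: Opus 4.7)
The plan is to bound $r(f)$ from below by $\alpha^m$ for the given $f \in \mathcal{A}^m$, since Lemma \ref{lem:generalized} then yields $r(\mathcal{A}) \ge \sup_{g \in \mathcal{A}^m} r(g)^{1/m} \ge r(f)^{1/m} \ge \alpha$. Note that $f$ is bounded: if $M$ is a uniform bound on $\|g\|$ for $g \in \mathcal{A}$, then $\|f\| \le M^m$, so $r(f)$ is well defined by Fekete's lemma as in the Preliminaries.

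The first step is to iterate the hypothesis. Order-preservation and homogeneity give, by induction on $n$, that
\[ f^{n+1}(x) \ge f(\alpha^{nm} x) = \alpha^{nm} f(x) \ge \alpha^{(n+1)m} x, \]
so $f^n(x) \ge \alpha^{nm} x$ for every $n \in \N$.

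The next step is to turn this order inequality into a norm estimate. Since $K$ is not assumed to be normal, I cannot directly conclude $\|f^n(x)\| \ge c\,\alpha^{nm}$ by comparison. Instead I would use Hahn-Banach: as $x \in K \setminus \{0\}$ and $K \cap (-K) = \{0\}$, the point $-x$ lies outside the closed convex cone $K$, so there exists $\phi \in X^*$ with $\phi \ge 0$ on $K$ and $\phi(x) > 0$. Applying $\phi$ to the iterated inequality gives
\[ \|\phi\|\,\|f^n(x)\| \ge \phi(f^n(x)) \ge \alpha^{nm}\, \phi(x), \]
and together with $\|f^n\|\,\|x\| \ge \|f^n(x)\|$ this yields
\[ \|f^n\|^{1/n} \ge \left(\frac{\phi(x)}{\|\phi\|\,\|x\|}\right)^{1/n} \alpha^m. \]
The prefactor tends to $1$, so $r(f) = \lim_{n\to\infty} \|f^n\|^{1/n} \ge \alpha^m$, which completes the proof.

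The only real obstacle is the possible lack of normality of $K$, which rules out the quick comparison $\|f^n(x)\| \ge \alpha^{nm}\|x\|/M$; the separating functional supplied by Hahn-Banach serves as the missing monotone scalar surrogate, and after that the argument is a routine application of the definition of the cone spectral radius.
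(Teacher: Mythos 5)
Your proof is correct and follows the same skeleton as the paper's: iterate the hypothesis to get $f^n(x)\ge\alpha^{nm}x$, convert this to a lower bound on $\|f^n\|^{1/n}$, conclude $r(f)\ge\alpha^m$, and finish with Lemma \ref{lem:generalized}. The one place you genuinely diverge is the conversion step. The paper simply asserts that $f^n(x)\ge\alpha^{mn}x$ ``means that'' $\|f^n\|\ge\alpha^{mn}$, which as written uses monotonicity of the norm on $K$ (or, up to a harmless constant, normality of $K$) --- neither of which is among the hypotheses of the lemma, which allows an arbitrary closed cone in an arbitrary real normed space. Your Hahn--Banach argument is exactly the right patch: since $-x\notin K$, strict separation of the point $-x$ from the closed convex set $K$ produces $\phi\in X^*$ with $\phi\ge 0$ on $K$ (using that $K$ is a cone containing $0$) and $\phi(x)>0$, and then $\|f^n\|\ge \phi(x)\,\alpha^{nm}/(\|\phi\|\,\|x\|)$, whose $n$-th root tends to $\alpha^m$. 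So your version proves the lemma in the stated generality, whereas the paper's one-line step is really only justified where the lemma is actually applied (finite-dimensional, hence normal, cones). This is a worthwhile refinement rather than a different proof.
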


\begin{proof}
Suppose $f\in \mathcal{A}^m$ and $f(x) \ge \alpha^m x$.  Since $f$ is order-preserving and homogeneous, $f^n(x) \ge \alpha^{mn}x$ for every $n \in \N$.  This means that $\|f^n\| \ge \alpha^{mn}$ for all $n \in \N$.  Therefore
$$r(f) = \lim_{n \rightarrow \infty} \|f^n\|^{1/n} \ge \alpha^m.$$  
Then by Lemma \ref{lem:generalized}, $r(\mathcal{A}) \ge r(f)^{1/m} \ge \alpha$. 
\end{proof}

For a bounded family of continuous, order-preserving, homogeneous maps $\mathcal{A}$ on a closed polyhedral cone $K$ we can define a preorder on the parts of $K$ as follows.  We say that $P \geA Q$ if there exists $x \in P$, $y \in Q$, and $f \in \mathcal{A}^*$ such that $f(x) \ge y$. We will write $P \gA Q$ if $P \geA Q$ and $Q \ngeA P$.  Observe that the relation $\geA$ is transitive since if $P \geA Q$ and $Q \geA R$, then there exist $f, g \in \mathcal{A}^*$ and $x \in P$, $y \in Q$, $z \in R$ such that $f(x) \ge y$ and $g(y) \ge z$.  Then $g(f(x)) \ge g(y) \ge z$, so $P \geA R$. 

\begin{lemma} \label{lem:partiallimit}
Let $K$ be a closed polyhedral cone in a finite dimensional normed space and let $\mathcal{A}$ be a bounded family of continuous, order-preserving, homogeneous maps on $K$. Assume that $r(\mathcal{A}) < 1$. Suppose that $x \in K$ and there is an increasing sequence $m_k \in \N$ and maps $f_k \in \mathcal{A}^{m_k}$ such that $\|f_k(x)\| \ge 1$ for every $k \in \N$ and 
$$y = \lim_{k \rightarrow \infty} \frac{f_k(x)}{\|f_k(x)\|}.$$ 
Let $P_x, P_y$ denote the parts of $K$ containing $x$ and $y$, respectively.  Then $P_x \gA P_y$.
\end{lemma}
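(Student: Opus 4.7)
The plan is to split the claim into two parts: first show $P_x \geA P_y$, then derive a contradiction from assuming $P_y \geA P_x$.

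For $P_x \geA P_y$, I would exploit condition $\mathsf{G}$, which $K$ satisfies since it is polyhedral. Fix any $t\in(0,1)$. Applied to the point $y$, condition $\mathsf{G}$ gives a $\delta>0$ such that $z \ge ty$ whenever $z\in K$ and $\|z-y\|<\delta$. Since $f_k(x)/\|f_k(x)\| \to y$, for all sufficiently large $k$ we have $f_k(x) \ge t\|f_k(x)\| y$. Setting $y' = t\|f_k(x)\| y \in P_y$ and taking $f_k \in \mathcal{A}^{m_k}\subseteq\mathcal{A}^*$, this witnesses $P_x \geA P_y$.

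To show $P_y \ngeA P_x$, suppose for contradiction that $P_y \geA P_x$, so there exist $g \in \mathcal{A}^\ell$, $u \in P_y$, $v \in P_x$ with $g(u) \ge v$. Since $u,y$ lie in the same part, there is $b>0$ with $u \le by$, equivalently $y \ge u/b$; since $v,x$ lie in the same part, there is $c>0$ with $v \ge cx$. Combining with the bound from the previous paragraph,
\[
f_k(x) \;\ge\; t\|f_k(x)\| y \;\ge\; \frac{t\|f_k(x)\|}{b}\, u.
\]
Applying $g$, which is order-preserving and homogeneous, gives
\[
(g\circ f_k)(x) \;\ge\; \frac{t\|f_k(x)\|}{b}\, g(u) \;\ge\; \frac{tc\|f_k(x)\|}{b}\, x.
\]
Now $g\circ f_k \in \mathcal{A}^{\ell+m_k}$, and $\|f_k(x)\|\ge 1$, so the scalar coefficient $\alpha_k := tc\|f_k(x)\|/b$ satisfies $\alpha_k^{1/(\ell+m_k)} \ge (tc/b)^{1/(\ell+m_k)}$, which tends to $1$ as $k\to\infty$ because $m_k\to\infty$. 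By Lemma \ref{lem:alpha} applied to each $k$,
\[
r(\mathcal{A}) \;\ge\; \alpha_k^{1/(\ell+m_k)},
\]
so letting $k\to\infty$ yields $r(\mathcal{A}) \ge 1$, contradicting the assumption $r(\mathcal{A})<1$. Hence $P_y \ngeA P_x$, and together with the first part $P_x \gA P_y$.

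The step I expect to be the delicate one is choosing the right scaling in the second part: we must ensure that the constant multiplying $x$ in $(g\circ f_k)(x)\ge\alpha_k x$ can be taken at the $(\ell+m_k)$-th root to give something tending to (or at least $\ge$) $1$. That works precisely because $\|f_k(x)\|\ge 1$ is uniform in $k$ while $m_k\to\infty$ absorbs the fixed constants $t,c,b$ in the exponent. Condition $\mathsf{G}$ (equivalently, polyhedrality) is what lets us turn convergence in direction into an actual inequality $f_k(x)\ge ty\cdot\|f_k(x)\|$ after finitely many steps, which is essential to both halves of the argument.
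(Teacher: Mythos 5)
Your proof is correct and follows essentially the same strategy as the paper's: condition $\mathsf{G}$ converts the directional convergence into an order inequality yielding $P_x \geA P_y$, and Lemma \ref{lem:alpha} turns a hypothetical $P_y \geA P_x$ into the contradiction $r(\mathcal{A}) \ge 1$. The only (harmless) difference is in the second half, where you push the inequality $f_k(x) \ge (t\|f_k(x)\|/b)\,u$ through $g$ by monotonicity and homogeneity, whereas the paper applies $g$ to the normalized limit and invokes continuity of $g$ together with a second use of condition $\mathsf{G}$ at $g(y)$; both then conclude identically via Lemma \ref{lem:alpha}.
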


\begin{proof}
Since $K$ is polyhedral, it has condition $\mathsf{G}$. So for any $c < 1$, there is an $m$ such that
$$f_k(x) \ge f_k(x)/\|f_k(x)\| \ge c y$$ 
for every $k \ge m$.  Therefore $P_x \geA P_y$.  

Suppose that $P_y \geA P_x$ as well.  Then there exist $y' \in P_y$ and $x' \in P_x$ and $g \in \mathcal{A}^n$ for some $n \in \N$ such that $g(y') \ge x'$.  Since $y'$ is comparable to $y$ and $x'$ is comparable to $x$, there is a constant $\epsilon > 0$ such that 
$g(y) \ge \epsilon x$.  Then 
$$\lim_{k \rightarrow \infty} g\left( \frac{f_k(x)}{\|f_k(x)\|} \right) = g(y) \ge \epsilon x.$$
Since $K$ is polyhedral, condition $\mathsf{G}$ guarantees that for any $0 < c < 1$, there is an $m > 0$ such that 
$$g(f_k(x)) \ge g\left( \frac{f_k(x)}{\|f_k(x)\|} \right)  \ge c \epsilon x$$
for all $k \ge m$.  
Since $r(\mathcal{A}) < 1$ we can choose a constant $\alpha$ such that $r(\mathcal{A}) < \alpha < 1$.  If $k$ is large enough, then 
$$g(f_k(x)) \ge c \epsilon x \ge (\alpha^{n + m_k})x.$$
This implies that $r(\mathcal{A}) \ge \alpha$ by Lemma \ref{lem:alpha}, but that is a contradiction. So we conclude that $P_x \gA P_y$.  
\end{proof}

\begin{lemma} \label{lem:boundedpartials}
Suppose that $K$ is a closed polyhedral cone in a finite dimensional normed space. Let $\mathcal{A}$ be a bounded, equicontinuous family of order-preserving, homogeneous maps on $K$.  If $\hat{r}(\mathcal{A}, Q) \le r(\mathcal{A})$ for all parts $Q \lA P$, then $\hat{r}(\mathcal{A},P) \le r(\mathcal{A})$. 
\end{lemma}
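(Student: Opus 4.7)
The plan is to argue by contradiction. Assume $\hat{r}(\mathcal{A},P) > r(\mathcal{A})$ and rescale $\mathcal{A}$ by a positive constant so that $r(\mathcal{A}) < 1 < \hat{r}(\mathcal{A},P) =: \rho$. Fix $x \in P$ with $\|x\| = 1$ and, by Lemma~\ref{lem:partial}, choose $f_k \in \mathcal{A}^{m_k}$ with $m_k \to \infty$ and $\|f_k(x)\|^{1/m_k} \to \rho$; then $\|f_k(x)\| \to \infty$, so after passing to a subsequence $q_k := f_k(x)/\|f_k(x)\|$ converges to some $y \in K$ with $\|y\| = 1$. Lemma~\ref{lem:partiallimit} yields $P_y \lA P$, and the hypothesis gives $\hat{r}(\mathcal{A}, P_y) \le r(\mathcal{A}) < 1$. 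Since $K$ has only finitely many parts $Q \lA P$, one may fix $c \in (r(\mathcal{A}),1)$ and an integer $N$ so that $\|h\|_Q \le c^n$ for every $h \in \mathcal{A}^n$ with $n \ge N$ and every part $Q \lA P$; similarly, for any $\epsilon > 0$, $\|h\|_P \le (\rho + \epsilon)^n$ once $n$ is large.

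The main tool is a trajectory split. Write $f_k = \phi_{k,m_k} \circ \cdots \circ \phi_{k,1}$ with $\phi_{k,i} \in \mathcal{A}$, let $x_{k,j}$ be the partial orbit lying in the part $P_{k,j}$, and note that $P = P_{k,0} \geA P_{k,1} \geA \cdots$ is weakly descending, since each $\phi \in \mathcal{A}$ sends a part to one weakly below it in the $\geA$-preorder. Define $j_k^* := \min\{j : P_{k,j} \lA P\}$ (setting $j_k^* = m_k + 1$ if no such $j$ exists), and split $f_k = g_k \circ h_k$ with $h_k \in \mathcal{A}^{j_k^*}$ and $g_k \in \mathcal{A}^{m_k - j_k^*}$. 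If $\lambda_k := j_k^*/m_k$ stays bounded above by some $\bar\lambda < 1$ along a subsequence, then $v_k := h_k(x)$ lies in a part $Q \lA P$, and for $k$ large one has
\[
\|f_k(x)\| = \|g_k(v_k)\| \le \|g_k\|_Q \,\|v_k\| \le c^{m_k - j_k^*}(\rho + \epsilon)^{j_k^*}.
\]
Taking $1/m_k$-th roots gives $\|f_k(x)\|^{1/m_k} \le c^{1-\lambda_k}(\rho + \epsilon)^{\lambda_k}$, whose $\limsup$ is strictly less than $\rho$ for $\epsilon$ small (since $c<1<\rho$ and $\bar\lambda < 1$), contradicting $\|f_k(x)\|^{1/m_k} \to \rho$.

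The hard part is the complementary case $\lambda_k \to 1$, where the orbit remains in the $\sim$-equivalence class of $P$ for nearly all iterations despite its direction $q_k$ converging to $y$ in a strictly lower part. I would handle this case by exploiting the convergence $q_k \to y$ via the equicontinuity of $\mathcal{A}^n$ for each fixed $n$ (obtained by iterating Lemma~\ref{lem:composition}): $\sup_{g \in \mathcal{A}^n}\|g(q_k) - g(y)\| \to 0$ as $k \to \infty$, and combining with $\|g(y)\| \le c^n$ for $g \in \mathcal{A}^n$, $n \ge N$, yields $\sup_{g \in \mathcal{A}^n}\|g(q_k)\| \le 2c^n$ for $k$ sufficiently large in terms of $n$. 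Composing then gives $\|(g \circ f_k)(x)\| \le 2c^n \|f_k(x)\|$ for every $g \in \mathcal{A}^n$, and a diagonal choice of $n_k \to \infty$ calibrated against $\|q_k - y\|$ should produce new sequences in $\mathcal{A}^{n_k + m_k}$ whose $(1/(n_k+m_k))$-th roots stay strictly below $\rho$, contradicting $\hat{r}(\mathcal{A},P) = \rho$. The delicate point here, and the main obstacle, is that the equicontinuity modulus of $\mathcal{A}^n$ generally degrades as $n$ grows, so the diagonal must carefully balance the rate $\|q_k - y\| \to 0$ against this deteriorating modulus, and the polyhedral structure has to be reused to lift the bound on specific compositions $g \circ f_k$ to a bound on the full supremum defining $\hat{r}(\mathcal{A},P)$.
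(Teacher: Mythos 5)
Your first case (the trajectory split with $\lambda_k \le \bar\lambda < 1$) is sound, but the complementary case $\lambda_k \to 1$ is a genuine gap, and you correctly identify it as the main obstacle without closing it. The sketched repair fails for two reasons. First, as you note, the equicontinuity modulus of $\mathcal{A}^{n}$ obtained by iterating Lemma \ref{lem:composition} degrades as $n$ grows, and nothing in the setup lets you calibrate $n_k$ against $\|q_k - y\|$: the rate at which $q_k \to y$ is not quantified, so no diagonal choice is available. Second, and more fundamentally, even if the diagonal worked it would only produce \emph{particular} elements $g \circ f_k \in \mathcal{A}^{n_k+m_k}$ with $\|(g\circ f_k)(x)\|^{1/(n_k+m_k)}$ small. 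Since $\hat{r}(\mathcal{A},P)$ is a $\limsup$ of \emph{suprema} over $\mathcal{A}^m$, exhibiting small elements contradicts nothing; you would need to dominate every element of $\mathcal{A}^{n_k+m_k}$, which this construction does not do.

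The paper's proof avoids both the part-tracking along trajectories and the diagonalization by reorganizing the contradiction around a \emph{single fixed} power $m$. After scaling so that $r(\mathcal{A}) < 1 < \hat{r}(\mathcal{A},P)$, choose $\beta \in (r(\mathcal{A}),1)$ and, by Lemma \ref{lem:partial} and the hypothesis on lower parts, one $m$ with $\|f\|_Q \le \beta$ for all $f \in \mathcal{A}^m$ and all $Q \lA P$. If the orbit $\{f(x) : f \in (\mathcal{A}^m)^+\}$ were unbounded, one can select orbit points $y_k$ with $\|y_k\| \ge k$ at which some $f \in \mathcal{A}^m$ satisfies $\|f(y_k)\| \ge \|y_k\|$ (a ``record point'' selection: if every sufficiently large orbit point were strictly contracted by the next $\mathcal{A}^m$-step, the orbit would be bounded). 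A subsequential limit $z$ of $y_k/\|y_k\|$ lies in a part $Q \lA P$ by Lemma \ref{lem:partiallimit}, so $\|f(z)\| \le \beta$ for all $f \in \mathcal{A}^m$, and equicontinuity of the single family $\mathcal{A}^m$ at the single point $z$ gives $\|f(y_k)\| \le (\beta+\epsilon)\|y_k\| < \|y_k\|$ for large $k$ and all $f\in\mathcal{A}^m$, contradicting the record property. Boundedness of the orbit then yields $\hat{r}(\mathcal{A},P)\le 1$ via Lemma \ref{lem:partial}. I recommend replacing your second case (and in fact the whole maximizing-sequence framework) with this orbit-boundedness argument; it needs only the equicontinuity of $\mathcal{A}^m$ for one $m$, which Lemma \ref{lem:composition} supplies.
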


\begin{proof}
Suppose that $\hat{r}(\mathcal{A}, P) > r(\mathcal{A})$.  We may scale the functions in $\mathcal{A}$ to assume without loss of generality that $r(\mathcal{A}) < 1$ and $\hat{r}(\mathcal{A}, P) > 1$.  
Fix a constant $r(\mathcal{A}) < \beta < 1$.  By Lemma \ref{lem:partial}, there is an $m \in \N$ large enough so that $\|f\|_Q \le \beta$ for every part $Q \lA P$ and $f \in \mathcal{A}^m$.  

Choose $x \in P$.  Suppose that the set $\mathcal{O}_m := \{f(x) : f \in \mathcal{A}^{mk}, k \in \N \}$ is unbounded.  Then there is a sequence $y_k \in \mathcal{O}_m$ such that for each $y_k$ there exists $f \in \mathcal{A}^m$ with 
$$\|f(y_k)\| \ge \|y_k\| \ge k.$$  
We may assume, by passing to a subsequence that $\lim_{k \rightarrow \infty} y_k/\|y_k\| = z$ exists.  By Lemma \ref{lem:partiallimit} $z$ is contained in a part $Q \lA P$. 

Choose $\epsilon > 0$ small enough so that $\beta + \epsilon < 1$.  Since the family $\mathcal{A}^m$ is equicontinuous by Lemma \ref{lem:composition}, there exists $\delta > 0$ such that $\|f(y) - f(z)\| < \epsilon$ for any $f \in \mathcal{A}^m$ and $y \in K$ with $\|y-z\| < \delta$. In particular, 
$$\|f(y)\| \le \|f(z)\| + \epsilon \le \beta + \epsilon.$$
Since $y_k/\|y_k\|$ converges to $z$, there is a $k$ large enough so that 
$$\left\| f\left( \frac{y_k}{\|y_k\|} \right) \right\| \le \beta + \epsilon.$$
Then 
$$\|f(y_k)\| \le (\beta + \epsilon) \|y_k\| < \|y_k\|$$
for all $f \in \mathcal{A}^m$, which is a contradiction.  So we conclude that $\mathcal{O}_m$ is bounded.  In that case, the set $\{f(x) : f \in \mathcal{A}^+\}$ is also bounded, so $\hat{r}(\mathcal{A}, P) \le 1$ by Lemma \ref{lem:partial}. But that contradicts our assumption that $\hat{r}(\mathcal{A}, P) > 1$, so we conclude that $\hat{r}(\mathcal{A}, P)  \le r(\mathcal{A})$. 
\end{proof}

\begin{proof}[Proof of Theorem \ref{thm:main}]
We have already shown in Lemma \ref{lem:ineq} that $\hat{r}(\mathcal{A}) \ge r(\mathcal{A})$.
We can assume that $K$ has nonempty interior.  Otherwise we can replace $X$ by the subspace $Y$ spanned by $K$. Then $K$ must have nonempty interior in $Y$ since $K$ is convex \cite[Theorem 6.2]{Rockafellar}. 

Suppose that there are parts $P$ such that $\hat{r}(\mathcal{A}, P) > r(\mathcal{A})$.  Since there are only finitely many parts with this property, there must be at least one that is minimal with respect to the relation $\lA$. But that contradicts Lemma \ref{lem:boundedpartials}. So we conclude that $\hat{r}(\mathcal{A}, P) \le r(\mathcal{A})$ for all parts including $\inter K$, and therefore $\hat{r}(\mathcal{A}) = r(\mathcal{A})$ by Lemma \ref{lem:partial}. 
\end{proof}

\begin{example}
Let $\mathcal{A}$ be a family of continuous, order-preserving, homogeneous maps on a closed cone $K$ in a finite dimensional normed space. Unlike families of linear maps, it is possible for $\mathcal{A}$ to be bounded without being equicontinuous.  For example, consider $K = \R^2_{\ge 0}$ and $\mathcal{A} = \{f_k : k > 0\}$ where 
$$f_k(x) = \begin{bmatrix} \min \{x_2, k x_1\} \\ x_2 \end{bmatrix}.$$ 
Then $f_k(e_2) = e_2$ but $f_k(\delta e_1 + e_2) = \mathbf{1}$
whenever $k > 1/\delta$. 
\end{example}

It turns out that even a simple bounded family of continuous, order-preserving, homogeneous maps on a polyhedral cone can fail the Berger-Wang condition in Theorem \ref{thm:main} if the family is not equicontinuous. 

\begin{example}
Consider the family of multiplicatively topical maps $\mathcal{A}$ on $\R^2_{\ge 0}$ defined by
$$f_\lambda(x) = \begin{bmatrix} x_1^\lambda x_2^{(1-\lambda)} \\ \tfrac{1}{2} x_2 \end{bmatrix}, ~ 0 < \lambda < 1.$$

Observe that if $f \in \mathcal{A}^m$, then $f(x)_2 = 2^{-m} x_2$ for any $x \in \R^2_{\ge 0}$. Therefore, the only possible eigenvalues of $f$ are $0$ (which corresponds to the eigenvector $e_1$) and $2^{-m}$ (corresponding to $e_2$). Since $r(f)$ must be an eigenvalue of $f$, it follows that $r(f) = 2^{-m}$, and therefore $r(\mathcal{A}) = \tfrac{1}{2}$ by Lemma \ref{lem:generalized}. 

For any $m$ and $\epsilon > 0$, it is possible to choose $\lambda$ large enough so that $f_\lambda^m(\mathbf{1})_1 > 1-\epsilon$. But then $\sup_{f \in \mathcal{A}^m} \|f\|^{1/m} = 1$ which implies that $\hat{r}(\mathcal{A}) = 1$ by Lemma \ref{lem:joint}. Therefore the generalized spectral radius of $\mathcal{A}$ is strictly smaller than the joint spectral radius of $\mathcal{A}$. 
\end{example}

Let $K$ be a closed cone in a real normed space.  A map $f: K \rightarrow K$ is \emph{subadditive} if $f(x + y) \le f(x) + f(y)$ for all $x, y \in K$.  The following lemma shows that we can remove the equicontinuity assumption from Theorem \ref{thm:main} if the maps are all subadditive. 

%

\begin{lemma} \label{lem:subEqui}
Let $K$ be a closed polyhedral cone in a finite dimensional normed space.  If $\mathcal{A}$ is a bounded family of subadditive, order-preserving, homogeneous maps $f: K \rightarrow K$, then $\mathcal{A}$ is equicontinuous. 
\end{lemma}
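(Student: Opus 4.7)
My plan is to prove the stronger statement that $\mathcal{A}$ is \emph{uniformly Lipschitz} on $K$, which is more than enough for equicontinuity. Without loss of generality I will assume that $K$ is solid: if not, replace $X$ by $\operatorname{span}(K)$, in which $K$ is solid by convexity (\cite[Theorem 6.2]{Rockafellar}). Interestingly, polyhedrality of $K$ does not appear to be needed for this lemma; the argument below works for any closed cone in a finite-dimensional normed space.

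The first step is an elementary decomposition lemma: there exists a constant $C > 0$ such that every $z \in X$ admits a representation $z = u - v$ with $u, v \in K$ and $\|u\|, \|v\| \le C\|z\|$. To prove it, fix $u_0 \in \inter K$ and $\epsilon > 0$ so that the closed $\epsilon$-ball around $u_0$ lies in $K$, and for $z \ne 0$ set $v = (\|z\|/\epsilon) u_0$ and $u = z + v$. Then $u = (\|z\|/\epsilon)\bigl(u_0 + \epsilon z/\|z\|\bigr) \in K$, and one checks $\|u\|, \|v\| \le C \|z\|$ with $C = \|u_0\|/\epsilon + 1$.

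Next, given $f \in \mathcal{A}$ and $x, y \in K$, apply the decomposition to $z = y - x$ to get $u, v \in K$ with $y + v = x + u$. Subadditivity and order-preservation yield the sandwich
$$-f(v) \le f(y) - f(x) \le f(u),$$
via $f(y) \le f(y+v) = f(x+u) \le f(x) + f(u)$ together with the symmetric inequality. Normality of $K$ (with some constant $M > 0$, automatic in finite dimensions) then converts this sandwich into
$$\|f(y) - f(x)\| \le (M+1)\bigl(\|f(u)\| + \|f(v)\|\bigr),$$
by applying the normality bound to $0 \le (f(y) - f(x)) + f(v) \le f(u) + f(v)$ and a triangle inequality. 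Combining with homogeneity ($\|f(w)\| \le \|f\| \cdot \|w\|$ for $w \in K$) and the uniform bound $L := \sup_{f \in \mathcal{A}} \|f\| < \infty$, I obtain
$$\|f(y) - f(x)\| \le 2C(M+1)L\,\|y - x\|,$$
uniformly in $f \in \mathcal{A}$.

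The argument is essentially routine once the correct sandwich is set up; the one conceptual ingredient is recognizing that an arbitrary difference $y - x \in X$ can be realized as $u - v$ with $u, v \in K$ of controlled norm, which is where the interior point of $K$ enters. No real obstacle is anticipated.
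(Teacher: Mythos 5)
Your proof is correct, and it takes a genuinely different route from the paper's. The paper's argument leans on condition $\mathsf{G}$, i.e.\ on polyhedrality: given $x$ and $\gamma>0$ it produces $\delta>0$ with $(1+\gamma)y \ge x$ whenever $\|x-y\|<\delta$, sets $z=(1+\gamma)y-x\in K$, and then combines subadditivity, normality, and boundedness into a one-sided estimate $0\le (1+\gamma)f(y)-f(x)\le f(z)$ whose right-hand side is controlled by $\delta$ and $\gamma$; this yields pointwise equicontinuity, with $\delta$ depending on $x$. You instead reduce to the case where $K$ is solid in its linear span and decompose an arbitrary difference $y-x$ as $u-v$ with $u,v\in K$ and $\|u\|,\|v\|\le C\|y-x\|$, using a closed ball around an interior point; the resulting two-sided sandwich $-f(v)\le f(y)-f(x)\le f(u)$ together with normality gives the uniform Lipschitz bound $2C(M+1)L\|y-x\|$. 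All the individual steps check out, and your version is stronger on two counts: it upgrades equicontinuity to a uniform Lipschitz estimate with an explicit constant, and it dispenses with polyhedrality entirely, so the conclusion (and the subsequent remark that subadditive, order-preserving, homogeneous self-maps of a cone are automatically continuous) holds for every closed cone in a finite-dimensional normed space. The only extra ingredient you need is the routine passage to the span of $K$, which the paper already performs elsewhere (in the proof of Theorem \ref{thm:main}), whereas the paper's condition-$\mathsf{G}$ argument applies directly to non-solid polyhedral cones without that reduction.
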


\begin{proof}
We have to show for any $x \in K$ and for any $\epsilon > 0$, there is a $\delta > 0$ such that $\|f(x) - f(y) \| < \epsilon$ for any $f \in \mathcal{A}$ and $y \in K$ such that $\|x - y\| < \delta$.  Since $K$ is polyhedral, it satisfies condition $\mathsf{G}$.  In particular, for any $\gamma > 0$, there exists $\delta > 0$ such that $(1+\gamma) y \ge x$ for any $y \in K$ with $\|x-y \| < \delta$.  

Let $z := (1+\gamma) y - x$ so that $z \in K$ and  
$$(1+ \gamma) f(y) = f(x + z) \le f(x) + f(z).$$
Therefore 
$$0 \le (1+ \gamma) f(y) - f(x) \le f(z).$$
Since $K$ is a closed cone in a finite dimensional space, it is normal and there is a constant $M > 0$ such that 
$$ \|(1+ \gamma) f(y) - f(x)\| \le M \|f(z)\|.$$
By the triangle inequality, 
$$ \|f(y) - f(x)\| \le \|(1+\gamma)f(y) - f(x) \| + \gamma \|f(y)\| \le M \|f(z)\| + \gamma \|f(y)\|.$$
Since $\mathcal{A}$ is bounded, there is an $N > 0$ such that $\|f\| \le N$ for all $f \in \mathcal{A}$.  Then 
$$ \|f(y) - f(x)\| \le M N \|z\| + \gamma N \|y\|.$$
By construction, 
$$\|z\| \le \|(1+ \gamma) y - x\| \le \|y - x\| + \gamma \|y\| \le \delta  + \gamma \|y\|$$
and 
$$\|y\| \le \|x\| + \delta.$$
Therefore 
$$ \|f(y) - f(x)\| \le M N (\delta + \gamma(\|x\| + \delta)) + \gamma N (\|x\| + \delta).$$
Since we can choose both $\gamma$ and $\delta$ arbitrarily small, we can guarantee that $\|f(y) - f(x)\| < \epsilon$ for any $\epsilon >0$.  
\end{proof}

\begin{remark}
Lemma \ref{lem:subEqui} implies that if $K$ is a closed polyhedral cone and $f: K \rightarrow K$ is order-preserving, homogeneous, and subadditive, then $f$ must be continuous.  To see this, simply apply the theorem to a family which only contains the one function $f$. 
\end{remark}

\begin{question} 
The proof of Theorem \ref{thm:main} makes heavy use of the fact that $K$ is polyhedral.  Is the Berger-Wang formula true if $K$ is not polyhedral?  In Corollary \ref{cor:primitiveBW} we show that the Berger-Wang formula holds on general closed cones in finite dimensional spaces if the family $\mathcal{A}$ satisfies a primitivity condition.  But it is an open question whether the Berger-Wang formula holds if we only assume the family is equicontinuous and bounded. 
\end{question}

\section{Continuity Properties} \label{sec:continuity}

For every $R \ge 0$, let $\Omega_R := \{x \in K : \|x\| \le R \}$. 

\begin{lemma} \label{lem:Hausdorff}
Let $K$ be a closed cone in a finite dimensional normed space $X$.  Suppose that $\mathcal{A}_k$ and $\mathcal{B}_k$ are sequences of compact families of continuous, homogeneous maps $f: K \rightarrow K$ such that $\mathcal{A}_k$ converges to $\mathcal{A}$ and $\mathcal{B}_k$ converges to $\mathcal{B}$ in the Hausdorff metric on $C(\Omega_1, X)$.  Then $\mathcal{A}_k \mathcal{B}_k$ converges to $\mathcal{A} \mathcal{B}$ in the Hausdorff metric.  
\end{lemma}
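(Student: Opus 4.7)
The plan is to bound both directions of the Hausdorff distance between $\mathcal{A}_k\mathcal{B}_k$ and $\mathcal{A}\mathcal{B}$ in $C(\Omega_1,X)$ by splitting a typical composition $f \circ g$ into a perturbation of the outer factor plus a perturbation of the inner factor, and then controlling each piece using a uniform bound and uniform equicontinuity, much as in Lemma \ref{lem:composition}.

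First I would set up the uniform data. Because $\mathcal{A}_k \to \mathcal{A}$ in the Hausdorff metric, the union $\mathcal{A} \cup \bigcup_k \mathcal{A}_k$ is totally bounded in the complete space $C(\Omega_1,X)$ (use a finite net for $\mathcal{A}$ enlarged by a few nets for the initial $\mathcal{A}_k$), hence has compact closure. By the Arzel\`{a}--Ascoli theorem, it is uniformly bounded by some $M_A$ and uniformly equicontinuous on $\Omega_1$; homogeneity then promotes this equicontinuity to any $\Omega_R$ via the identity $\|f(y)-f(y')\| = R\,\|f(y/R)-f(y'/R)\|$. The same is done for $\mathcal{B}$, yielding a bound $M_B$; in particular $g(\Omega_1) \subseteq \Omega_{M_B}$ for every $g \in \mathcal{B} \cup \bigcup_k \mathcal{B}_k$, so the compositions are well-defined on $\Omega_1$.

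Next, fix $\epsilon > 0$ and take $k$ large. Given $f \in \mathcal{A}_k$ and $g \in \mathcal{B}_k$, choose $f' \in \mathcal{A}$ and $g' \in \mathcal{B}$ that nearly realize the Hausdorff distances to $f$ and $g$. For $x \in \Omega_1$ write
\begin{align*}
\|f(g(x)) - f'(g'(x))\| \le \|f(g(x)) - f'(g(x))\| + \|f'(g(x)) - f'(g'(x))\|.
\end{align*}
Since $g(x) \in \Omega_{M_B}$, the scaling identity $\|f(y)-f'(y)\| = M_B\|f(y/M_B)-f'(y/M_B)\|$ bounds the first summand by $M_B\,\|f-f'\|_{C(\Omega_1,X)}$, which is $O\bigl(d_H(\mathcal{A}_k,\mathcal{A})\bigr)$. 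Uniform equicontinuity of the outer factors on $\Omega_{M_B}$ bounds the second summand, once $\|g(x)-g'(x)\| \le \|g-g'\|_{C(\Omega_1,X)}$ is small, i.e.\ once $d_H(\mathcal{B}_k,\mathcal{B})$ is small. Taking the sup over $x \in \Omega_1$ gives one direction of the Hausdorff distance, and a symmetric argument starting from $f' \circ g' \in \mathcal{A}\mathcal{B}$ and approximating the factors by elements of $\mathcal{A}_k,\mathcal{B}_k$ gives the other.

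The only real subtlety is that the Hausdorff metric lives on $\Omega_1$ while the outer factor is evaluated on the larger set $\Omega_{M_B}$; this transfer is handled by the homogeneity/scaling identity above. Once that is in place, the argument reduces to a standard $\epsilon/2$ split of the sort already used in Lemma \ref{lem:composition}, and the two distances $d_H(\mathcal{A}_k,\mathcal{A})$ and $d_H(\mathcal{B}_k,\mathcal{B})$ control the result.
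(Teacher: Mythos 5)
Your argument is correct and follows essentially the same route as the paper's proof: a triangle-inequality split of $f\circ g - f'\circ g'$ into an outer-factor perturbation and an inner-factor perturbation, controlled by a uniform bound $M$ on the images $g(\Omega_1)$ and uniform equicontinuity of the outer family on $\Omega_M$. Your explicit use of the homogeneity identity to transfer closeness in $C(\Omega_1,X)$ to the larger set $\Omega_{M_B}$ is in fact slightly more careful than the paper, which elides the resulting constant factor, and your total-boundedness argument for equicontinuity of the union $\mathcal{A}\cup\bigcup_k\mathcal{A}_k$ is sound, though the paper gets by with equicontinuity of the compact limit family $\mathcal{A}$ alone.
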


\begin{proof}
There is an upper bound $M > 0$ such that $g(\Omega_1) \subseteq \Omega_M$ for all $g \in \mathcal{B} \cup \bigcup_k \mathcal{B}_k$.   
Choose $\epsilon> 0$. For all $f \in \mathcal{A}$ there exists $\delta > 0$ such that 
$$\|f(x) - f(y) \| \le \tfrac{1}{2} \epsilon$$ 
when $x, y \in \Omega_M$ have $\|x - y \| \le \delta$ since $\mathcal{A}$ is uniformly equicontinuous on the compact set $\Omega_M$. 
Choose $k$ large enough so that the Hausdorff distances $d(\mathcal{A}_k, \mathcal{A}) \le \tfrac{1}{2} \epsilon$ and  $d(\mathcal{B}_k, \mathcal{B}) \le \delta$.  Then, for any $f \in \mathcal{A}$, $g \in \mathcal{B}$, there exists $f_k \in \mathcal{A}_k$ and $g_k \in \mathcal{B}_k$ such that the following inequalities hold for all $x \in \Omega_M$ and $y \in \Omega_1$.  
$$\|f(x) - f_k(x) \| \le \tfrac{1}{2} \epsilon$$ 
and 
$$\|g(y) - g_k(y) \| \le \delta.$$ 
In addition, for any $f_k \in \mathcal{A}_k$ and $g_k \in \mathcal{B}_k$ there exist $f \in \mathcal{A}, g \in \mathcal{B}$ such that the inequalities above also hold for all $x \in \Omega_M$ and $y \in \Omega_1$.
Then we have 
\begin{align*}
\|f(g(x)) - f_k(g_k(x)) \| &\le \|f(g(x)) - f(g_k(x))\| + \|f(g_k(x)) - f_k(g_k(x))\|  \\
&\le \tfrac{1}{2} \epsilon + \tfrac{1}{2} \epsilon = \epsilon
\end{align*}
for all $x \in \Omega_1$.  Therefore for any fixed $f \in \mathcal{A}, g \in \mathcal{B}$,  
$$\inf \{ \|f\circ g - f_k \circ g_k \| : f_k \in  \mathcal{A}_k, g_k \in \mathcal{B}_k\} \le \epsilon.$$ 
Likewise, for any fixed $f_k \in \mathcal{A}_k, g_k \in \mathcal{B}_k$ 
$$\inf \{ \|f\circ g - f_k \circ g_k \| : f \in  \mathcal{A}, g \in \mathcal{B}\} \le \epsilon.$$  
So we conclude that the Hausdorff distance $d(\mathcal{A}_k\mathcal{B}_k, \mathcal{AB}) \le \epsilon$.  
\end{proof}

An immediate consequence of Lemma \ref{lem:Hausdorff} is that if $\mathcal{A}_k$ is a sequence of compact families of continuous, homogeneous maps $f$ on a closed cone $K$ that converges to a family $\mathcal{A}$ in the Hausdorff metric on $C(\Omega_1, X)$, then for any $m \in \N$, $\mathcal{A}_k^m$ converges to $\mathcal{A}^m$. Before proving the main result of this section, we need one other minor technical lemma. 

\begin{lemma} \label{lem:supCont}
Let $K$ be a closed cone in a finite dimensional normed space $X$. If $h:D \rightarrow \R$ is a continuous function on a closed subset $D \subseteq C(\Omega_1, X)$, then the map $\mathcal{A} \mapsto  \sup_{f \in \mathcal{A}} h(f)$ is continuous with respect to the Hausdorff metric on $D$. 
\end{lemma}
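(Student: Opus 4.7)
The plan is to fix a sequence of compact families $\mathcal{A}_k \to \mathcal{A}$ in the Hausdorff metric on $D$ and verify the two one-sided inequalities
\[
\liminf_{k \to \infty} \sup_{f \in \mathcal{A}_k} h(f) \ge \sup_{f \in \mathcal{A}} h(f), \qquad \limsup_{k \to \infty} \sup_{f \in \mathcal{A}_k} h(f) \le \sup_{f \in \mathcal{A}} h(f).
\]
Since the Hausdorff metric is defined on nonempty compact subsets of the metric space $D$, both $\mathcal{A}$ and each $\mathcal{A}_k$ are compact. Continuity of $h$ therefore guarantees that all of the suprema are attained, which will let approximation arguments pass continuity directly from nearby points to their values.

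For the liminf inequality I would pick a maximizer $f^* \in \mathcal{A}$ with $h(f^*) = \sup_{f \in \mathcal{A}} h(f)$. The definition of the Hausdorff metric supplies elements $f_k \in \mathcal{A}_k$ with $\|f_k - f^*\| \le d_H(\mathcal{A}_k, \mathcal{A})$, which tends to $0$; continuity of $h$ then yields $h(f_k) \to h(f^*)$. Since $\sup_{f \in \mathcal{A}_k} h(f) \ge h(f_k)$ for every $k$, taking $\liminf$ gives the desired lower bound.

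For the limsup inequality, for each $k$ I would choose a maximizer $f_k^* \in \mathcal{A}_k$ with $h(f_k^*) = \sup_{f \in \mathcal{A}_k} h(f)$, and then use the Hausdorff convergence in the other direction to produce $g_k \in \mathcal{A}$ with $\|f_k^* - g_k\| \le d_H(\mathcal{A}_k, \mathcal{A}) \to 0$. The crucial ingredient here is the compactness of $\mathcal{A}$: passing to a subsequence $\{g_{k_j}\}$ that converges to some $g \in \mathcal{A}$, the estimate $\|f_{k_j}^* - g_{k_j}\| \to 0$ forces $f_{k_j}^* \to g$ as well, and continuity of $h$ yields $h(f_{k_j}^*) \to h(g) \le \sup_{f \in \mathcal{A}} h(f)$. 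Applying the same extraction to an arbitrary subsequence of $\{\sup_{\mathcal{A}_k} h\}$ produces the limsup bound via the standard subsequence-of-subsequence trick.

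There is no real obstacle here; the argument is the textbook one for continuity of $\sup$ under Hausdorff convergence of compact sets. The only point worth flagging is that one should use the compactness of the limit set $\mathcal{A}$ (rather than any joint compactness of the sequence $\{\mathcal{A}_k\}$) to extract the convergent subsequence of maximizers in the limsup step.
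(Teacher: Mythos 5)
Your argument is correct and follows essentially the same route as the paper's proof: both directions are handled by taking maximizers (which exist by compactness), pairing them with nearby elements of the other family via the Hausdorff metric, and using compactness of the limit family $\mathcal{A}$ to extract a convergent subsequence for the $\limsup$ bound. No substantive differences.
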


\begin{proof}
Suppose that $\mathcal{A}_k$ and $\mathcal{A}$ are compact subsets of $D$ such that $\mathcal{A}_k \rightarrow \mathcal{A}$ in the Hausdorff metric. Let 
$$\alpha_k := \sup_{f \in \mathcal{A}_k} h(f) \text{ and } \alpha := \sup_{f \in \mathcal{A}} h(f).$$
Since $\mathcal{A}$ is compact, there exists $f \in \mathcal{A}$ such that $h(f)$ is maximal.  Consider a sequence $f_k \in \mathcal{A}_k$ such that $f_k \rightarrow f$.  Then $h(f_k) \rightarrow h(f) = \alpha$.  Since $\alpha_k \ge h(f_k)$ for every $k$, it follows that $\liminf_{k \rightarrow \infty} \alpha_k \ge \alpha$.  

There is a subsequence $k_n$ such that $\lim_{n \rightarrow \infty} \alpha_{k_n} = \limsup_{k \rightarrow \infty} \alpha_k$. For each $k_n$, choose $f_n \in \mathcal{A}_{k_n}$ such that $\alpha_{k_n} = h(f_n)$.  Such an $f_n$ exists since each $\mathcal{A}_{k_n}$ is compact.   
Since $\mathcal{A}_{k_n} \rightarrow \mathcal{A}$ as $n \rightarrow \infty$, we can choose $g_n \in \mathcal{A}$ for each $n \in \N$ such that $\lim_{n \rightarrow \infty} \|f_n - g_n\| = 0$.  By passing to a subsequence if necessary, we can assume that $g_{n}$ converges to $g \in \mathcal{A}$. Then $\lim_{n \rightarrow \infty} h(f_{n}) = h(g)$.  Therefore 
$$\alpha \le \liminf_{k \rightarrow \infty} \alpha_k \le \limsup_{k \rightarrow \infty} \alpha_k = \alpha$$
which proves that $\lim_{k \rightarrow \infty} \alpha_k = \alpha$. 
\end{proof}

The main result of this section follows.

\begin{theorem}
Let $K$ be a closed polyhedral cone in a finite dimensional normed space $X$.  Suppose that $\mathcal{A}_k$ is a sequence of compact families of continuous, order-preserving, homogeneous maps $f: K \rightarrow K$ such that $\mathcal{A}_k$ converges to $\mathcal{A}$ in the Hausdorff metric on $C(\Omega_1, X)$.  Then $r(\mathcal{A}_k)$ converges to $r(\mathcal{A})$. 
\end{theorem}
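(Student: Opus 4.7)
The plan is to reduce the problem to showing that the joint spectral radius is continuous under Hausdorff convergence, and then to prove this as two separate semi-continuity bounds. Since each $\mathcal{A}_k$ (and $\mathcal{A}$) is compact in $C(\Omega_1,X)$, Arzelà-Ascoli makes each of these families equicontinuous on $\Omega_1$, and then on $K$ by homogeneity. Consequently, Theorem \ref{thm:main} applies: $r(\mathcal{A}_k)=\hat{r}(\mathcal{A}_k)$ and $r(\mathcal{A})=\hat{r}(\mathcal{A})$. It therefore suffices to prove $\hat{r}(\mathcal{A}_k)\to\hat{r}(\mathcal{A})$.

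For the upper bound $\limsup_k \hat{r}(\mathcal{A}_k)\le \hat{r}(\mathcal{A})$, I would use the infimum characterization from Lemma \ref{lem:joint}: given $\epsilon>0$, fix $m$ with $\sup_{f\in\mathcal{A}^m}\|f\|^{1/m}<\hat{r}(\mathcal{A})+\epsilon$. Iterating Lemma \ref{lem:Hausdorff} gives $\mathcal{A}_k^m\to\mathcal{A}^m$ in the Hausdorff metric; since $g\mapsto\|g\|^{1/m}$ is continuous on $C(\Omega_1,X)$, Lemma \ref{lem:supCont} yields $\sup_{g\in\mathcal{A}_k^m}\|g\|^{1/m}\to\sup_{g\in\mathcal{A}^m}\|g\|^{1/m}$. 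Combined with the inequality $\hat{r}(\mathcal{A}_k)\le\sup_{g\in\mathcal{A}_k^m}\|g\|^{1/m}$, the bound follows.

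The lower bound $\liminf_k r(\mathcal{A}_k)\ge r(\mathcal{A})$ is the main obstacle, because the cone spectral radius of an individual map is only upper semi-continuous on $C(\Omega_1,X)$ in general. The plan is to bypass this via Lemma \ref{lem:alpha} and condition $\mathsf{G}$. Given $\epsilon>0$, use Lemma \ref{lem:generalized} to pick $m$ and $f\in\mathcal{A}^m$ with $r(f)^{1/m}\ge r(\mathcal{A})-\epsilon/2$; we may assume $r(f)>0$. By \cite[Corollary 5.4.2]{LemmensNussbaum} there is a unit eigenvector $x\in K$ with $f(x)=r(f)x$, hence $f^n(x)=r(f)^nx$ for all $n$. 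Choose $f_k\in\mathcal{A}_k^m$ with $f_k\to f$ in $C(\Omega_1,X)$; iterating Lemma \ref{lem:Hausdorff} on the singleton families $\{f_k\}\to\{f\}$ gives $f_k^n\to f^n$ uniformly on $\Omega_1$ for each fixed $n$.

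Now I invoke polyhedrality through condition $\mathsf{G}$: for any $c\in(0,1)$ there exists $\delta>0$ such that every $y\in K$ with $\|y-r(f)^nx\|<\delta$ satisfies $y\ge c\, r(f)^nx$. For $k$ sufficiently large, $f_k^n(x)\in K$ lies within $\delta$ of $r(f)^nx$, so
$$f_k^n(x)\ \ge\ c\,r(f)^nx\ =\ \bigl(c^{1/(mn)}r(f)^{1/m}\bigr)^{mn}x.$$
Since $f_k^n\in\mathcal{A}_k^{mn}$, Lemma \ref{lem:alpha} yields $r(\mathcal{A}_k)\ge c^{1/(mn)}r(f)^{1/m}$. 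Taking $c=1/2$ and choosing $n$ large enough that $c^{1/(mn)}r(f)^{1/m}\ge r(f)^{1/m}-\epsilon/2$ gives $\liminf_k r(\mathcal{A}_k)\ge r(\mathcal{A})-\epsilon$, and letting $\epsilon\to 0$ completes the proof. The essential point is that the order-theoretic Lemma \ref{lem:alpha} delivers a lower bound on the spectral radius from a single cone inequality, and condition $\mathsf{G}$ converts the metric convergence $f_k^n(x)\to r(f)^nx$ into exactly such an inequality — this is where polyhedrality of $K$ is used in an essential way.
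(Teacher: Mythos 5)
Your proof is correct, and the upper-semicontinuity half is essentially the paper's argument: both of you combine Lemma \ref{lem:Hausdorff}, Lemma \ref{lem:supCont}, and the infimum characterization of $\hat{r}$ from Lemma \ref{lem:joint} to realize $\hat{r}(\mathcal{A})$ as an infimum of Hausdorff-continuous functions, and both invoke Theorem \ref{thm:main} to pass between $r$ and $\hat{r}$. The divergence is in the lower bound. The paper disposes of it in one line by citing the continuity of the cone spectral radius $f \mapsto r(f)$ for continuous, order-preserving, homogeneous maps on polyhedral cones (\cite[Corollary 4.5]{LemmensNussbaum13}), after which $r(\mathcal{A}) = \sup_m \sup_{f \in \mathcal{A}^m} r(f)^{1/m}$ is lower semicontinuous as a supremum of continuous functions, with no need for Theorem \ref{thm:main} on that side. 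You instead re-derive the needed lower semicontinuity from scratch: existence of an eigenvector $x$ with $f(x) = r(f)x$ (\cite[Corollary 5.4.2]{LemmensNussbaum}, already quoted in the preliminaries), condition $\mathsf{G}$ to convert the metric convergence $f_k^n(x) \to r(f)^n x$ into the cone inequality $f_k^n(x) \ge c\, r(f)^n x$, and Lemma \ref{lem:alpha} to turn that inequality into $r(\mathcal{A}_k) \ge c^{1/(mn)} r(f)^{1/m}$; the quantifier order (fix $n$, then let $k \to \infty$) is handled correctly. What the paper's route buys is brevity, at the cost of importing a nontrivial external result; what your route buys is a self-contained argument that makes explicit exactly where polyhedrality enters (condition $\mathsf{G}$), and in effect it reproves the lower-semicontinuity half of the cited continuity theorem in the only case needed. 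Both arguments tacitly use that $\mathcal{A}^m$ and $\mathcal{A}_k^m$ are compact when applying Lemma \ref{lem:supCont}, so that is not a gap relative to the paper.
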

\begin{proof}
The spectral radius $r$ for continuous, order-preserving, homogeneous maps on polyhedral cones is continuous \cite[Corollary 4.5]{LemmensNussbaum13}. Therefore the map $\mathcal{A} \mapsto \sup_{f \in \mathcal{A}} r(f)$ is continuous with respect to the Hausdorff metric by Lemma \ref{lem:supCont}.  So is the map $\mathcal{A} \mapsto \sup_{f \in \mathcal{A}} \|f\|$. 

If follows from Lemma \ref{lem:Hausdorff} that $\mathcal{A} \mapsto \mathcal{A}^m$ is continuous in the Hausdorff metric for every $m \in \N$. Therefore both 
$$\mathcal{A} \mapsto \sup_{f \in \mathcal{A}^m} \|f\|^{1/m} \text{ and }\mathcal{A} \mapsto \sup_{f \in \mathcal{A}^m} r(f)^{1/m}$$
are continuous.  Since  
$$r(\mathcal{A}) =  \sup_{m \in \N} \sup_{f \in \mathcal{A}^m} r(f)^{1/m}$$
is a supremum of continuous functions, it is lower semicontinuous.  At the same time, $r(\mathcal{A}) = \hat{r}(\mathcal{A})$ by Theorem \ref{thm:main}. Therefore 
$$r(\mathcal{A}) = \hat{r}(\mathcal{A}) = \inf_{m \in \N} \sup_{f \in \mathcal{A}^m} \|f\|^{1/m}$$
is upper semicontinuous since it is the infimum of continuous function.  Therefore $r(\mathcal{A})$ is continuous in the Hausdorff metric. 
\end{proof}

\section{Boundedness Conditions} \label{sec:bounded}

\begin{definition} \label{def:irred}
Let $\mathcal{A}$ be a bounded family of continuous, order-preserving, homogeneous maps on a solid closed cone $K$ in a real normed space. We say that $\mathcal{A}$ is \emph{irreducible} if there is no non-trivial closed face of $K$ that is an invariant set for all $f \in \mathcal{A}$.  
We say that $\mathcal{A}$ is \emph{primitive} if for every nonzero $x \in K$, there exists $f \in \mathcal{A}^+$ such that $f(x) \in \inter K$.  
\end{definition}

Note that \cite{DeiddaGuglielmiTudisco25} proposed a different definition of irreducibility for families of continuous, order-preserving, homogeneous maps on polyhedral cones. A family is irreducible in their terminology if and only if it is primitive in ours. 

The following result characterizes irreducibility for subadditive families on $\R^n_{\ge 0}$. 

\begin{lemma} \label{lem:irred}
Let $\mathcal{A}$ be a family of subadditive, order-preserving, homogeneous functions on $\R^n_{\ge 0}$. Then $\mathcal{A}$ is irreducible if and only if for every pair of distinct $i, j \in [n]$, there exists $m \in \N$ and $f \in \mathcal{A}^m$ such that $f(e_j)_i > 0$.  
\end{lemma}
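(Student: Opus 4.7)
The plan is to prove the two directions separately, using the bijection between closed faces of $\R^n_{\ge 0}$ and subsets $S\subseteq[n]$ via $F_S=\{x\in\R^n_{\ge 0}:x_\ell=0\text{ for all }\ell\notin S\}$; the non-trivial faces correspond to proper non-empty $S$.

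For $(\Leftarrow)$ I would argue by contrapositive. If a non-trivial face $F_S$ is $\mathcal{A}$-invariant, it is also $\mathcal{A}^+$-invariant, so for any $j\in S$ and $i\in[n]\setminus S$ (both choices exist because $\emptyset\subsetneq S\subsetneq[n]$), every $f\in\mathcal{A}^+$ sends $e_j\in F_S$ into $F_S$, forcing $f(e_j)_i=0$ and contradicting the hypothesis. This direction needs neither subadditivity nor order-preservation.

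For $(\Rightarrow)$ I would negate the reachability condition: suppose there are distinct $i,j$ with $f(e_j)_i=0$ for every $f\in\mathcal{A}^+$. Then define
$$V:=\{k\in[n]:h(e_j)_k>0\text{ for some }h\in\mathcal{A}^*\}.$$
Note $j\in V$ (via the identity, which lies in $\mathcal{A}^*$) and $i\notin V$ (since $i\neq j$ and the negated hypothesis covers every $h\in\mathcal{A}^+$), so $V$ is a proper non-empty subset and $F_V$ is a non-trivial face. The target is to prove $F_V$ is $\mathcal{A}$-invariant, which will contradict irreducibility. I would first establish $g(e_k)_\ell=0$ for every $k\in V$, $\ell\notin V$, and $g\in\mathcal{A}$: for each $k\in V$ pick $h_k\in\mathcal{A}^*$ with $c_k:=h_k(e_j)_k>0$, so that $h_k(e_j)\ge c_k e_k$ coordinatewise; order-preservation and homogeneity then give $g(e_k)\le c_k^{-1}(g\circ h_k)(e_j)$. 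Since $g\circ h_k\in\mathcal{A}^+\subseteq\mathcal{A}^*$, the definition of $V$ forces $(g\circ h_k)(e_j)_\ell=0$ for $\ell\notin V$, and nonnegativity gives $g(e_k)_\ell=0$.

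The hardest step is then propagating this from basis vectors to an arbitrary $x=\sum_{k\in V}x_k e_k\in F_V$: subadditivity and homogeneity will give $g(x)\le\sum_{k\in V}x_k g(e_k)$, whence $g(x)_\ell=0$ for $\ell\notin V$ and $g(x)\in F_V$. For a linear family this step would be automatic; in our generality it can fail, and subadditivity is precisely the hypothesis that lets the ``support-cannot-grow'' statement for basis vectors survive when we form nonnegative combinations. This is the only place where subadditivity actually enters the proof.
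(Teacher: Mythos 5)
Your proof is correct and follows essentially the same route as the paper's: both directions rest on the correspondence between coordinate subsets $S\subseteq[n]$ and closed faces $F_S$ of $\R^n_{\ge 0}$, and subadditivity enters exactly where you say it does, in passing from $g(e_k)_\ell=0$ for basis vectors to $g(x)_\ell=0$ for a general $x\in F_V$. The only organizational difference is that the paper phrases the forward direction as strong connectivity of a directed graph on $[n]$ (which additionally yields a witness in $\mathcal{A}^m$ with $m<n$, a bound the lemma does not require), whereas you work directly with the reachable set $V$ of $e_j$ under $\mathcal{A}^*$ and spell out the order-preservation/homogeneity argument for composing along a ``path,'' a step the paper leaves implicit.
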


\begin{proof}
Suppose that $\mathcal{A}$ is irreducible. 
Let $G(\mathcal{A})$ be the directed graph with vertices $[n]$, and an edge from $i$ to $j$ if there exists $f \in \mathcal{A}$ such that $f(e_j)_i > 0$.  Suppose that $G(\mathcal{A})$ is not strongly connected. Then there is a nonempty, proper set of vertices $I \subset [n]$ with no outbound edges in $G(\mathcal{A})$, that is, $f(e_j)_i = 0$ for all $i \in I$ and $j \in J := [n] \bs I$. Since the maps $f \in \mathcal{A}$ are subadditive, the closed face 
$$F_J := \{x \in \R^n_{\ge 0} : x_i = 0 \text{ for all } i \notin J \}$$
is invariant under $\mathcal{A}$.  This contradicts the assumption that $\mathcal{A}$ is an irreducible family.  So we conclude that the graph $G(\mathcal{A})$ is strongly connected.  

Since $G(\mathcal{A})$ is strongly connected, there is a path of some length $m < n$ that connects any $i$ to any $j$ in $[n]$.  Hence there is an $f \in \mathcal{A}^m$ such that $f(e_j)_i > 0$.  

Conversely, it is clear that no proper closed face $F_J$ can be invariant under $\mathcal{A}$ if there is an $f \in \mathcal{A}^m$ such that $f(e_j)_i > 0$ for any $j \in J$ and $i \notin J$.  
\end{proof}

The following theorem for subadditive families is similar to Elsner's theorem \cite[Lemma 4]{Elsner95} for families of matrices in $\C^{n \times n}$.



\begin{theorem} \label{thm:subadditiveElsner}
If $\mathcal{A}$ is a bounded, irreducible family of subadditive, order-preserving, homogeneous maps on $\R^n_{\ge 0}$ and $r(\mathcal{A}) = 1$, then the semigroup $\mathcal{A}^+$ is bounded.
\end{theorem}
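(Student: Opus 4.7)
My plan is to argue by contradiction. Assume $\mathcal{A}^+$ is unbounded; I will produce $f\in\mathcal{A}^m$ and $x\in\R^n_{\ge 0}\setminus\{0\}$ with $f(x)\ge\beta x$ for an arbitrarily large $\beta$, and then Lemma \ref{lem:alpha} forces $r(\mathcal{A})\ge\beta^{1/m}>1$, contradicting $r(\mathcal{A})=1$. The construction has three steps: concentrate the norm blowup onto a single coordinate, extract a limit direction using condition $\mathsf{G}$, and use irreducibility to close a loop.

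Work with a monotone norm, e.g.\ $\ell^\infty$, by equivalence of norms on $\R^n$. For an order-preserving, subadditive, homogeneous $f$ on $\R^n_{\ge 0}$, order-preservation gives $\|f\|=\|f(\mathbf{1})\|$, while subadditivity gives $f(\mathbf{1})=f(\sum_i e_i)\le\sum_i f(e_i)$, so $\|f\|\le n\max_i\|f(e_i)\|$. Choose $f_k\in\mathcal{A}^{m_k}$ with $\|f_k\|\to\infty$; passing to a subsequence and relabeling coordinates, I may assume $\|f_k(e_1)\|\to\infty$. Normalize $z_k:=f_k(e_1)/\|f_k(e_1)\|$ and pass to a further subsequence so $z_k\to z$ with $\|z\|=1$. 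Since $\R^n_{\ge 0}$ is polyhedral it satisfies condition $\mathsf{G}$, so for any $c<1$ and all large $k$, $z_k\ge cz$, hence $f_k(e_1)\ge c\|f_k(e_1)\|\,z$. Fix $j^*$ with $z_{j^*}>0$; then $f_k(e_1)\ge c z_{j^*}\|f_k(e_1)\|\,e_{j^*}$.

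By Lemma \ref{lem:irred}, irreducibility makes the directed graph $G(\mathcal{A})$ strongly connected, so composing maps corresponding to the edges of a directed walk from $1$ to $j^*$ (taken as a cycle through $1$ if $j^*=1$) produces $h\in\mathcal{A}^p$ with $h(e_{j^*})_1\ge\eta>0$, and hence $h(e_{j^*})\ge\eta e_1$. Then by order-preservation and homogeneity,
$$(f_k\circ h)(e_{j^*})\;\ge\;\eta f_k(e_1)\;\ge\;\beta_k\,e_{j^*},\qquad\beta_k:=\eta c z_{j^*}\|f_k(e_1)\|\to\infty.$$
Applying Lemma \ref{lem:alpha} to $f_k\circ h\in\mathcal{A}^{m_k+p}$, $x=e_{j^*}$, and $\alpha=\beta_k^{1/(m_k+p)}$ gives $r(\mathcal{A})\ge\beta_k^{1/(m_k+p)}$. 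For $k$ so large that $\beta_k>1$, the right-hand side exceeds $1$, contradicting $r(\mathcal{A})=1$.

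The main obstacle is that condition $\mathsf{G}$ alone gives no control over which coordinates lie in the support of the limit direction $z$, so the first coordinate may vanish in $z$ even though $f_k(e_1)$ blows up in norm. The graph-theoretic characterization of irreducibility in Lemma \ref{lem:irred}, whose proof genuinely needs subadditivity, is precisely what lets me recover any coordinate $e_{j^*}$ with $z_{j^*}>0$ and funnel it back to $e_1$ through the semigroup, producing the self-inequality $(f_k\circ h)(e_{j^*})\ge\beta_k e_{j^*}$ that drives the contradiction via Lemma \ref{lem:alpha}.
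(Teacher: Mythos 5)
Your proof is correct and follows essentially the same route as the paper's: subadditivity reduces unboundedness of the semigroup to unboundedness along a single basis vector, Lemma \ref{lem:irred} supplies a map in $\mathcal{A}^+$ closing the loop back to that vector, and Lemma \ref{lem:alpha} yields the contradiction with $r(\mathcal{A})=1$. The only difference is cosmetic: you extract a limit direction via compactness and condition $\mathsf{G}$, whereas the paper simply picks an unbounded coordinate $f(e_j)_i$ directly, which makes that detour unnecessary (any $j^*$ with $f_k(e_1)_{j^*}$ unbounded works at once, since $f_k(e_1)\ge f_k(e_1)_{j^*}\,e_{j^*}$ in $\R^n_{\ge 0}$).
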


\begin{proof}
Suppose that $\{f(\mathbf{1}) : f \in \mathcal{A}^+ \}$ is unbounded. Then there exists $i \in [n]$ such that $\{f(\mathbf{1})_i : f \in \mathcal{A}^+ \}$ is unbounded.  By subadditivitiy, 
$$f(\mathbf{1}) \le \sum_{j \in [n]} f(e_j)$$
for all $f \in \mathcal{A}^+$.  Therefore there is a $j \in [n]$ such that $\{f(e_j)_i : f \in \mathcal{A}^+ \}$ is unbounded. Lemma \ref{lem:irred} implies that there is a $g \in \mathcal{A}^+$ such that $g(e_i)_j > 0$. Therefore the set $\{f(g(e_i))_i : f \in \mathcal{A}^+ \}$ is unbounded, which would imply that there is a map $f \circ g \in \mathcal{A}$ such that $f(g(e_i)) \ge c e_i$ where $c > 1$.  That would contradict $r(\mathcal{A}) = 1$, by Lemma \ref{lem:alpha}. Therefore we conclude that $\mathcal{A}^+$ is bounded.   
\end{proof}

The conclusion of Theorem \ref{thm:subadditiveElsner} can fail without the subadditivity assumption.  
\begin{example}
Let $f, g, h: \R^2_{\ge 0} \rightarrow \R^2_{\ge 0}$ be defined as follows.  
$$f(x) = \begin{bmatrix} x_1 + \min(x_1, x_2) \\ x_2 \end{bmatrix}, ~ g(x) =  \begin{bmatrix} 0 \\ x_1 + x_2 \end{bmatrix}, ~ h(x) = \begin{bmatrix} x_1+ x_2 \\ 0 \end{bmatrix}.$$
Let $\mathcal{A} = \{f,g,h \}$. It's clear that the iterates $f^k(\mathbf{1})$ are unbounded, however no non-trivial face of $\R^2_{\ge 0}$ is left invariant by all three maps. Observe the following composition relations:
$$\begin{array}{ccc}
f \circ g = g, & g \circ g = g, & h \circ g = h,  \\
f \circ h = h, & g \circ h = g, & h \circ h = h.
\end{array}$$
Therefore $\mathcal{A}^* = \{f^k, g \circ f^k, h \circ f^k : k \ge 0 \}$. By inspection, the only eigenvectors of $f^k$ (up to scaling) are $e_1$ and $e_2$, both with eigenvalue 1. The only eigenvectors of $g \circ f^k$ and $h \circ f^k$ are $e_2$ and $e_1$ respectively, again both with eigenvalue 1.  Therefore $r(f^k) = r(g \circ f^k) = r(h \circ f^k) = 1$ for all $k \in \N$. It follows that $r(\mathcal{A}) = 1$. 
\end{example}

Without subadditivity, we need to assume a stronger condition like primitivity to guarantee that $\mathcal{A}^+$ is bounded when $r(\mathcal{A}) = 1$. The following result holds for any closed cone with nonempty interior in a finite dimensional normed space.
 
\begin{theorem} \label{thm:primitiveElsner}
Let $K$ be a solid closed cone in a finite dimensional real normed space. If $\mathcal{A}$ is a bounded, primitive family of continuous, order-preserving, homogeneous maps on $K$ with $r(\mathcal{A}) = 1$, then the semigroup $\mathcal{A}^+$ is bounded.  
\end{theorem}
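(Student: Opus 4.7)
The plan is to argue by contradiction, reducing the boundedness of $\mathcal{A}^+$ in operator norm to the boundedness of the forward orbit of a single interior point, and then using primitivity to push any unbounded limit direction back into $\inter K$ so that Lemma~\ref{lem:alpha} applies.

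First I would fix $x_0 \in \inter K$ and show that it suffices to prove $\{f(x_0) : f \in \mathcal{A}^+\}$ is bounded. Since $K$ is solid, there is $\epsilon > 0$ such that $x_0 - \epsilon y \in K$ for every $y \in K$ with $\|y\| = 1$, i.e.\ $x_0 \geq \epsilon y$. Applying any $f \in \mathcal{A}^+$ and using order-preservation and homogeneity gives $f(x_0) \geq \epsilon f(y)$, and normality of $K$ (automatic in finite dimensions) produces a constant $M$ with $\|f(y)\| \leq (M/\epsilon)\|f(x_0)\|$. Thus $\|f\|_K \leq (M/\epsilon)\|f(x_0)\|$, so boundedness of the orbit at $x_0$ implies boundedness of $\mathcal{A}^+$.

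Next I would suppose toward a contradiction that $\{f(x_0) : f \in \mathcal{A}^+\}$ is unbounded and pick $f_k \in \mathcal{A}^{n_k}$ with $\|f_k(x_0)\| \to \infty$. Since the unit sphere of $K$ is compact, after passing to a subsequence I may assume $f_k(x_0)/\|f_k(x_0)\| \to y$ for some $y \in K$ with $\|y\|=1$. In contrast to the polyhedral setting, $K$ need not satisfy condition $\mathsf{G}$, so $y$ may lie on $\partial K$ and cannot be directly compared to $x_0$. This is where primitivity intervenes: by hypothesis there exists $g \in \mathcal{A}^{m_0}$ with $g(y) \in \inter K$.

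Finally, since both $g(y)$ and $x_0$ lie in the open set $\inter K$, I can choose $c > 0$ small enough that $g(y) - c x_0 \in \inter K$. Continuity of $g$ combined with homogeneity gives $g(f_k(x_0))/\|f_k(x_0)\| = g(f_k(x_0)/\|f_k(x_0)\|) \to g(y)$, so for all sufficiently large $k$ one has $g(f_k(x_0))/\|f_k(x_0)\| - c x_0 \in \inter K \subseteq K$; equivalently, $g \circ f_k \in \mathcal{A}^{n_k + m_0}$ satisfies $g(f_k(x_0)) \geq (c\|f_k(x_0)\|)\, x_0$. Setting $\alpha_k := (c\|f_k(x_0)\|)^{1/(n_k+m_0)}$, Lemma~\ref{lem:alpha} yields $r(\mathcal{A}) \geq \alpha_k$, and since $\|f_k(x_0)\| \to \infty$ we eventually have $c\|f_k(x_0)\| > 1$ and hence $\alpha_k > 1$, contradicting $r(\mathcal{A}) = 1$. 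The main obstacle is the possibility that the limit direction $y$ lies on $\partial K$; primitivity is precisely the hypothesis tailored to overcome it, consistent with the preceding non-subadditive example showing that irreducibility alone does not suffice.
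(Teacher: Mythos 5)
Your proof is correct and follows essentially the same strategy as the paper's: reduce boundedness of $\mathcal{A}^+$ to boundedness of the orbit of an interior point, use primitivity to bring an unbounded limit direction back into $\inter K$ where it dominates a multiple of that interior point, and apply Lemma~\ref{lem:alpha} to contradict $r(\mathcal{A})=1$. The only difference is one of implementation: the paper obtains a uniform $\epsilon$ by a finite subcover of the unit sphere of $K$ with the open sets $U_{f,\epsilon}=\{x\in K: f(x)\gg \epsilon u\}$, whereas you pass to a convergent subsequence of normalized orbit points and invoke primitivity and continuity only at the single limit direction $y$.
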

\begin{proof}
Let $\Sigma := \{x \in K : \|x\| = 1 \}$.  Choose any $u \in \inter K$.  
Suppose that $\mathcal{A}^+$ is unbounded.  Then for every $M > 0$, there exists $g \in \mathcal{A}^+$ such that $\|g(u)\| \ge M$. 

For any $f \in \mathcal{A}^+$ and $\epsilon > 0$, let $U_{f, \epsilon} := \{x \in K : f(x) \gg \epsilon u \}$.  Since $\mathcal{A}$ is a primitive family, every $x \in \Sigma$ is contained in at least one $U_{f, \epsilon}$.  Therefore the sets $U_{f, \epsilon}$ are an open cover for $\Sigma$.  Since $\Sigma$ is compact, there is a finite subcover.  Therefore there is one $\epsilon > 0$ such that for every $x \in \Sigma$, there exists $f \in \mathcal{A}^+$ with $f(x) \ge \epsilon u$.  In particular, there exists $f \in \mathcal{A}^+$ such that 
$$f\left(\frac{g(u)}{\|g(u)\|} \right) \ge \epsilon u.$$
It follows that 
$$f(g(u))  \ge  \epsilon u \|g(u)\| \ge \epsilon M u.$$
Since $M$ can be chosen arbitrarily large, we can guarantee that $\epsilon M > 1$, but then $r(\mathcal{A}) > 1$ by Lemma \ref{lem:alpha}, which is a contradiction.  
\end{proof}

As an immediate consequence, we get the Berger-Wang formula for primitive families. 

\begin{corollary} \label{cor:primitiveBW}
Let $K$ be a solid closed cone in a finite dimensional real normed space. If $\mathcal{A}$ is a bounded, primitive family of continuous, order-preserving, homogeneous maps on $K$, then 
$$r(\mathcal{A}) = \hat{r}(\mathcal{A}).$$ 
\end{corollary}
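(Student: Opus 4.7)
By Lemma~\ref{lem:ineq} we already have $r(\mathcal{A}) \le \hat{r}(\mathcal{A})$, so only the reverse inequality is in question. The plan is a scaling argument that reduces matters to Theorem~\ref{thm:primitiveElsner}.

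Assume first that $r(\mathcal{A}) > 0$. Set $t := r(\mathcal{A})$ and $\mathcal{B} := \{f/t : f \in \mathcal{A}\}$. Primitivity, boundedness, continuity, order-preservation and homogeneity are all invariant under positive scaling, and $r(\mathcal{B}) = r(\mathcal{A})/t = 1$ by Lemma~\ref{lem:generalized} together with the identity $r(f/t) = r(f)/t$. Theorem~\ref{thm:primitiveElsner} then yields a uniform bound $\|g\| \le C$ for all $g \in \mathcal{B}^+$, whence $\hat{r}(\mathcal{B}) \le \lim_{m \to \infty} C^{1/m} = 1$, and scaling back by $t$ gives $\hat{r}(\mathcal{A}) \le r(\mathcal{A})$.

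The remaining case $r(\mathcal{A}) = 0$ cannot be renormalized to $r = 1$. Here the plan is to suppose $\hat{r}(\mathcal{A}) > 0$ for contradiction and pick any $0 < s < \hat{r}(\mathcal{A})$; the scaled family $\mathcal{C} := \mathcal{A}/s$ is bounded and primitive with $r(\mathcal{C}) = 0$ but $\hat{r}(\mathcal{C}) > 1$. The latter forces $\sup_{f \in \mathcal{C}^m} \|f\|$ to grow exponentially, so $\mathcal{C}^+$ is unbounded. The proof of Theorem~\ref{thm:primitiveElsner} in fact establishes the sharper dichotomy that unboundedness of the semigroup of a bounded, primitive family forces $r > 1$: for any $M > 1/\epsilon$ (with $\epsilon$ depending only on $\mathcal{C}$ and the auxiliary interior point $u$) the proof constructs $f, g \in \mathcal{C}^+$ with $(f \circ g)(u) \ge \epsilon M u$, and Lemma~\ref{lem:alpha} then gives $r(\mathcal{C}) \ge (\epsilon M)^{1/\text{length}(f \circ g)} > 1$. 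This contradicts $r(\mathcal{C}) = 0$.

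The main obstacle is this degenerate $r(\mathcal{A}) = 0$ case; it is handled by noticing that the argument inside the proof of Theorem~\ref{thm:primitiveElsner} really proves the scaling-invariant contrapositive ``$r \le 1$ implies that the semigroup is bounded,'' and not only the $r = 1$ version as stated there.
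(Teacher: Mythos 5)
Your proof is correct and follows essentially the same route as the paper: scale so that $r(\mathcal{A})=1$, apply Theorem~\ref{thm:primitiveElsner} to bound the semigroup, and deduce $\hat{r}(\mathcal{A})\le r(\mathcal{A})$ from Lemma~\ref{lem:joint} together with Lemma~\ref{lem:ineq}. The extra case $r(\mathcal{A})=0$ that you handle (correctly, by reopening the proof of Theorem~\ref{thm:primitiveElsner} to extract the contrapositive) is in fact vacuous: picking any $x\in\inter K$, primitivity gives $f\in\mathcal{A}^m$ with $f(x)\in\inter K$, and since $\inter K$ is a single part this yields $f(x)\ge\alpha x$ for some $\alpha>0$, so Lemma~\ref{lem:alpha} forces $r(\mathcal{A})\ge\alpha^{1/m}>0$.
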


\begin{proof}
We can assume that $r(\mathcal{A}) = 1$ by scaling the maps in $\mathcal{A}$. Then $\mathcal{A}^+$ is bounded by Theorem \ref{thm:primitiveElsner}.  Therefore $\hat{r}(\mathcal{A}) \le 1$ by Lemma \ref{lem:joint}.  So by Lemma \ref{lem:ineq} we have 
$$1 = r(\mathcal{A}) \le \hat{r}(\mathcal{A}) \le 1.$$
Therefore $r(\mathcal{A}) = \hat{r}(\mathcal{A})$. 
\end{proof}

\section{Extremal and Barabanov Norms} \label{sec:norms}
An \emph{extremal norm} for a family of functions $\mathcal{A}$ on $\R^n_{\ge 0}$ is a norm $\|\cdot \|_*$ such that 
$$\|f(x) \|_* \le \hat{r}(\mathcal{A}) \|x\|_*$$
for all $x \in \R^n_{\ge 0}$ and $f \in \mathcal{A}$.  An extremal norm for $\mathcal{A}$ is called a \emph{Barabanov norm} if for every $x \in \R^n_{\ge 0}$, there exists $f \in \mathcal{A}$ such that 
$$\|f(x)\|_* = \hat{r}(\mathcal{A}) \|x\|_*.$$

For any $x \in \R^n$, let $|x| \in \R^n_{\ge 0}$ be the vector obtained by applying the absolute value function to each entry of $x$. 
If $\nu: \R^n_{\ge 0} \rightarrow \R$ is any order-preserving, subadditive, homogeneous function such that $\nu(x) > 0$ for all non-zero $x \in \R^n_{\ge 0}$, then observe that the function $x \mapsto \nu(|x|)$ is a norm on $\R^n$.  

\begin{theorem} \label{thm:extremal}
Let $\mathcal{A}$ be a bounded irreducible family of subadditive, order-preserving, homogeneous maps on $\R^n_{\ge 0}$. Let $\|\cdot \|$ be a monotone norm on $\R^n$.  Then the function 
$$\|x \|_* := \sup_{f \in \mathcal{A}^*} \| f(|x|) \|$$
is a monotone extremal norm. 
\end{theorem}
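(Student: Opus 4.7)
The plan is to reduce to the normalized case $\hat{r}(\mathcal{A}) = 1$, verify that the supremum defining $\|\cdot\|_*$ is finite, and then read off the norm axioms, monotonicity, and extremality from the subadditive, order-preserving, homogeneous structure of $\mathcal{A}$. By Lemma \ref{lem:subEqui}, a bounded subadditive family is automatically equicontinuous, so Theorem \ref{thm:main} yields $r(\mathcal{A}) = \hat{r}(\mathcal{A})$. Irreducibility together with Lemma \ref{lem:irred} produces compositions carrying each $e_i$ back into the direction of $e_i$, which forces $\hat{r}(\mathcal{A}) > 0$. Rescaling each $f \in \mathcal{A}$ by $\hat{r}(\mathcal{A})^{-1}$ preserves every hypothesis, so we may assume $\hat{r}(\mathcal{A}) = r(\mathcal{A}) = 1$. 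Theorem \ref{thm:subadditiveElsner} then furnishes the uniform bound $N := \sup_{g \in \mathcal{A}^*} \|g\| < \infty$, so that $\|x\|_* \le N \||x|\|$ is finite for every $x \in \R^n$.

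Once finiteness is in hand, the norm axioms are routine. Positive homogeneity uses $|\lambda x| = |\lambda| |x|$ together with homogeneity of each $f$. For the triangle inequality, the componentwise bound $|x+y| \le |x| + |y|$ combined with order preservation and subadditivity of every $f \in \mathcal{A}^*$ gives
$$f(|x+y|) \le f(|x|) + f(|y|),$$
and then monotonicity of $\|\cdot\|$ followed by taking a supremum over $f$ yields $\|x+y\|_* \le \|x\|_* + \|y\|_*$. Positive definiteness follows from $\mathrm{id} \in \mathcal{A}^*$, which forces $\|x\|_* \ge \||x|\| > 0$ whenever $x \ne 0$. Monotonicity is equally direct: if $0 \le x \le y$ in $\R^n$, then $|x| \le |y|$, order preservation gives $f(|x|) \le f(|y|)$ for each $f \in \mathcal{A}^*$, and the monotone norm $\|\cdot\|$ gives $\|x\|_* \le \|y\|_*$.

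For extremality, fix $f \in \mathcal{A}$ and $x \in \R^n_{\ge 0}$. Since $f(x) \in \R^n_{\ge 0}$ we have $|f(x)| = f(x)$, and the inclusion $\mathcal{A}^* \circ \{f\} \subseteq \mathcal{A}^+ \subseteq \mathcal{A}^*$ yields
$$\|f(x)\|_* = \sup_{g \in \mathcal{A}^*} \|g(f(x))\| \le \sup_{h \in \mathcal{A}^*} \|h(x)\| = \|x\|_* = \hat{r}(\mathcal{A}) \|x\|_*,$$
the final equality reflecting the scaling convention. The main obstacle is the finiteness step: without uniform boundedness of $\mathcal{A}^+$ in operator norm, the supremum defining $\|\cdot\|_*$ can diverge. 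Establishing that uniform bound is precisely where the earlier machinery does all the work, via the chain Lemma \ref{lem:subEqui} $\Rightarrow$ equicontinuity $\Rightarrow$ Theorem \ref{thm:main} $\Rightarrow$ $r(\mathcal{A}) = \hat{r}(\mathcal{A}) = 1$ $\Rightarrow$ Theorem \ref{thm:subadditiveElsner}; the remaining verifications are then formal.
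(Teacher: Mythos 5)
Your proof is correct and follows essentially the same route as the paper's: normalize to $\hat{r}(\mathcal{A})=1$, obtain finiteness of the supremum from the chain Lemma \ref{lem:subEqui} $\Rightarrow$ Theorem \ref{thm:main} $\Rightarrow$ Theorem \ref{thm:subadditiveElsner}, and then verify homogeneity, the triangle inequality via $f(|x+y|)\le f(|x|)+f(|y|)$, positive definiteness via $\mathrm{id}\in\mathcal{A}^*$, monotonicity, and extremality from $\mathcal{A}^*\circ\{f\}\subseteq\mathcal{A}^*$. Your added remark that irreducibility forces $\hat{r}(\mathcal{A})>0$ (so the rescaling is legitimate) is a small point the paper leaves implicit, but it does not change the argument.
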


\begin{proof}
We may assume without loss of generality that $\hat{r}(\mathcal{A}) = 1$ by scaling the functions in $\mathcal{A}$ if necessary.  Note that $\hat{r}(\mathcal{A}) = r(\mathcal{A})$ by Theorem \ref{thm:main} and Lemma \ref{lem:subEqui}.  Therefore $\|x\|_*$ is finite for all $x \in \R^n$ by Theorem \ref{thm:subadditiveElsner}.

From the definition we have 
$$\|g(x) \|_* = \sup_{f \in \mathcal{A}^+} \| f(g(x)) \| \le \sup_{f \in \mathcal{A}^+} \| f(x) \| = \|x\|_*$$
for all $g \in \mathcal{A}$ and $x \in \R^n_{\ge 0}$.  So we just need to verify that $\|\cdot \|_*$ is a norm.  It is clear that $\|\cdot \|_*$ is homogeneous, nonnegative, and monotone. Since $f$ is order-preserving and subadditive, $f(|x + y|) \le f(|x|) + f(|y|)$ for any $x,y \in \R^n_{\ge 0}$.  Then since $\|\cdot \|$ is monotone, we have
\begin{align*}
\|x+y\|_* &= \sup_{f \in \mathcal{A}^+} \| f(|x + y|) \| \\
&\le \sup_{f \in \mathcal{A}^+} \| f(|x|) \| + \|f(|y|) \| \\
&\le \sup_{f \in \mathcal{A}^+} \| f(|x|) \| + \sup_{g \in \mathcal{A}^+} \|g(|y|) \| \\
&= \|x\|_* + \|y\|_*.
\end{align*}
Finally, since the identity map is included in $\mathcal{A}^*$, observe that $\|x\|_* \ge \|x\| > 0$ for all nonzero $x \in \R^n$.
\end{proof}

\begin{theorem} \label{thm:Barabanov}
Let $\mathcal{A}$ be a compact irreducible family of subadditive, order-preserving, homogeneous maps on $\R^n_{\ge 0}$. If $\|\cdot\|_*$ is a monotone extremal norm for $\mathcal{A}$, then the function 
$$\|x\|_{**} := \lim_{m \rightarrow \infty} \sup_{f \in \mathcal{A}^m} \| f(|x|) \|_*$$
is a Barabanov norm for $\mathcal{A}$. 
\end{theorem}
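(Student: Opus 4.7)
The plan is to normalize so $\hat{r}(\mathcal{A}) = 1$ (so extremality of $\|\cdot\|_*$ reads $\|f(x)\|_* \le \|x\|_*$ for all $f \in \mathcal{A}$ and $x \in \R^n_{\ge 0}$), set $\alpha_m(x) := \sup_{f \in \mathcal{A}^m}\|f(|x|)\|_*$, and verify each defining property of a Barabanov norm in turn. Extremality makes $\alpha_{m+1}(x) \le \alpha_m(x)$, so $\|x\|_{**} = \lim_m \alpha_m(x)$ exists. Using $|x+y|\le|x|+|y|$, the order-preservation and subadditivity of each $f$, and the monotonicity of $\|\cdot\|_*$ — just as in the proof of Theorem \ref{thm:extremal} — one sees that $\|\cdot\|_{**}$ is homogeneous and subadditive. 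The extremality of $\|\cdot\|_{**}$, namely $\|g(x)\|_{**} \le \|x\|_{**}$ for $g \in \mathcal{A}$, is then immediate from $\alpha_m(g(x)) \le \alpha_{m+1}(x)$.

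The delicate step is positive definiteness, and this is where irreducibility enters. Let $N := \{x \in \R^n : \|x\|_{**} = 0\}$: homogeneity and subadditivity make $N$ a closed linear subspace, extremality makes it $\mathcal{A}$-invariant, and monotonicity of $\|\cdot\|_{**}$ on $\R^n_{\ge 0}$ (inherited from $\|\cdot\|_*$) makes $N \cap \R^n_{\ge 0}$ downward closed. Consequently $N \cap \R^n_{\ge 0}$ coincides with the face $F_J$ for $J := \{i : e_i \in N\}$, which is $\mathcal{A}$-invariant and must therefore be trivial by irreducibility, so $J = \emptyset$ or $J = [n]$. To rule out $J = [n]$ I would show $\|\mathbf{1}\|_{**} > 0$ as follows. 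Extremality gives $\sup_{f \in \mathcal{A}^m}\|f\|_* \le 1$, while $\hat{r}(\mathcal{A}) = \inf_m(\sup_{f \in \mathcal{A}^m}\|f\|_*)^{1/m} = 1$ forces the reverse inequality, so $\sup_{f \in \mathcal{A}^m}\|f\|_* = 1$ for every $m$. Pick $f_m \in \mathcal{A}^m$ and $x_m \in \R^n_{\ge 0}$ with $\|x_m\|_* = \|f_m(x_m)\|_* = 1$; compactness of the $\|\cdot\|_*$-unit sphere in $\R^n_{\ge 0}$ gives a uniform $C > 0$ with $\mathbf{1} \ge x_m/C$, whence $f_m(\mathbf{1}) \ge f_m(x_m)/C$ and $\alpha_m(\mathbf{1}) \ge 1/C$, yielding $\|\mathbf{1}\|_{**} \ge 1/C > 0$.

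Finally, for the Barabanov attainment property, observe that $\mathcal{A}^m$ is compact and equicontinuous for every $m$ (by Lemma \ref{lem:composition} together with the continuity of composition underlying Lemma \ref{lem:Hausdorff}), so each $\alpha_m$ is continuous in $x$. Fix $x \in \R^n_{\ge 0}$. The identity $\alpha_{m+1}(x) = \sup_{g \in \mathcal{A}} \alpha_m(g(x))$, combined with compactness of $\mathcal{A}$ and continuity of $g \mapsto \alpha_m(g(x))$, produces $g_m \in \mathcal{A}$ attaining the sup. Pass to a convergent subsequence $g_{m_k} \to g^* \in \mathcal{A}$. For each fixed $j$ and $m_k \ge j$, monotonicity of $(\alpha_m)$ gives $\alpha_j(g_{m_k}(x)) \ge \alpha_{m_k}(g_{m_k}(x)) = \alpha_{m_k+1}(x) \to \|x\|_{**}$; continuity of $\alpha_j$ gives $\alpha_j(g^*(x)) \ge \|x\|_{**}$, and letting $j \to \infty$ yields $\|g^*(x)\|_{**} \ge \|x\|_{**}$, with equality forced by extremality. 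The main obstacle throughout is positive definiteness: irreducibility alone could in principle allow $\|\cdot\|_{**}$ to vanish identically, and ruling this out requires the sharp equality $\sup_{f \in \mathcal{A}^m}\|f\|_* = 1$ that comes from combining extremality with $\hat{r}(\mathcal{A}) = 1$.
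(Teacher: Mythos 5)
Your proposal is correct, and while the routine steps (normalization, monotone convergence of $\alpha_m$, the seminorm properties, and extremality of $\|\cdot\|_{**}$) match the paper, you take genuinely different routes on the two delicate points. For the Barabanov attainment, the paper picks near-optimal words $f_n \in \mathcal{A}^n$, factors them as $f_{n,n}\circ\cdots\circ f_{n,1}$, and runs a diagonal extraction on the partial compositions to produce the maximizing $g_1 \in \mathcal{A}$; you instead exploit the recursion $\alpha_{m+1}(x) = \sup_{g\in\mathcal{A}}\alpha_m(g(x))$, the continuity of each $\alpha_m$ (valid since $\mathcal{A}^m$ is bounded and equicontinuous by Lemmas \ref{lem:subEqui} and \ref{lem:composition}), and a single subsequential limit $g_{m_k}\to g^*$ in the compact set $\mathcal{A}$. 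Your version needs only compactness of $\mathcal{A}$ itself rather than of every power $\mathcal{A}^k$, and is arguably cleaner. For positive definiteness, the paper finds a point of $\Sigma_\infty = \bigcap_m \Sigma_m$ with $\|x\|_{**}=1$ via nested compact sets, deduces from subadditivity that some $\|e_i\|_{**}>0$, and then propagates positivity to every $e_j$ through Lemma \ref{lem:irred}; you instead observe that the null set of the seminorm meets $\R^n_{\ge 0}$ in an $\mathcal{A}$-invariant face $F_J$, invoke irreducibility to force $J=\emptyset$ or $J=[n]$, and rule out the latter by showing $\|\mathbf{1}\|_{**}\ge 1/C>0$ directly from the attained equality $\sup_{f\in\mathcal{A}^m}\|f\|_*=1$ (which both proofs establish the same way). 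The two arguments use the same essential inputs -- compactness, subadditivity, and irreducibility -- but your face-theoretic formulation makes the role of irreducibility more transparent, while the paper's $\Sigma_\infty$ argument avoids having to verify that the null set is a face. Both are sound.
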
 

\begin{proof}
We may assume without loss of generality that $\hat{r}(\mathcal{A}) = 1$, by scaling the functions in $\mathcal{A}$ if necessary.  
Fix $x \in \R^n$ and let $\alpha_m = \sup_{f \in \mathcal{A}^m} \| f(|x|) \|_*$ for each $m \in \N$.  Since $\|\cdot \|_*$ is extremal, it follows that $\alpha_m$ is a non-increasing sequence, and therefore the limit defining $\|x\|_{**}$ exists for every $x \in \R^n$.  

It is easy to see that $\|\lambda x\|_{**} = |\lambda| \|x\|_{**}$ for all $\lambda \in \R$ and that $\|x\|_{**}$ is always nonnegative.  Since $\|\cdot\|_*$ is monotone and any  $f \in \mathcal{A}^+$ is order-preserving and subadditive,  we have
\begin{align*}
\sup_{f \in \mathcal{A}^m} \| f(|x + y|) \|_* &\le \sup_{f \in \mathcal{A}^m} \| f(|x| + |y|) \|_*  \\
&\le \sup_{f \in \mathcal{A}^m} \| f(|x|) \|_* + \|f(|y|) \|_* \\
&\le \sup_{f \in \mathcal{A}^m} \| f(|x|) \| + \sup_{g \in \mathcal{A}^m} \|g(|y|) \| \\
\end{align*}
for all $m \in \N$.  It follows by taking the limit as $m$ goes to infinity that $\|x+y\|_{**} \le \|x\|_{**} + \|y\|_{**}$ for all $x, y \in \R^n$.  

The function $\|\cdot\|_{**}$ also has the extremal property.  For every $g \in \mathcal{A}$ and $x \in \R^n_{\ge 0}$, 
$$\|g(x) \|_{**} = \lim_{m \rightarrow \infty} \sup_{f \in \mathcal{A}^m} \|f(g(x))\|_* \le \lim_{m \rightarrow \infty} \sup_{f \in \mathcal{A}^{m+1}} \|f(x)\|_* = \|x\|_{**}.$$

We now show that $\|\cdot \|_{**}$ has the Barabanov property.  
For each $n \in \N$, there exists $f_n \in \mathcal{A}^n$ such that $\|f_n(|x|)\|_* \ge \alpha_n - \tfrac{1}{n}$ where $\alpha_n := \sup_{f \in \mathcal{A}^n} \|f(|x|)\|_*$. Each $f_n$ is a composition of functions $f_{n,n}\circ \cdots \circ f_{n,2}\circ f_{n,1}$ where each $f_{n, k} \in \mathcal{A}$.  We can choose an increasing sequence $m_n$ such that $g_1 = \lim_{m_n \rightarrow \infty} f_{m_n,1}$ exists in $\mathcal{A}$.  Then for each $k \in \N$, we can take a refinement of the sequence $m_n$ and assume that $g_k = \lim_{n \rightarrow \infty} f_{m_n,k}\circ \cdots \circ f_{m_n,2}\circ f_{m_n,1}$ exists in $\mathcal{A}^k$.  Observe that for every $k$ and $m_n \ge k$,
$$\|f_{m_n,1}(|x|)\|_* \ge \|f_{m_n,k}\circ \cdots \circ f_{m_n,2}\circ f_{m_n,1} ( |x| )\|_* \ge \|f_{m_n}(|x|)\|_* \ge \alpha_{m_n} - \tfrac{1}{m_n}$$
since $\|\cdot\|_*$ has the extremal property.  Passing to the limit as $m_n$ approaches infinity, we get 
$$\|g_1 ( |x| )\|_* \ge \|g_k(|x|)\|_* \ge \|x\|_{**}$$
for every $k$.  If we let $k \rightarrow \infty$, then we see that 
$$\|g_1(|x|)\|_{**} = \lim_{k \rightarrow \infty} \sup_{f \in \mathcal{A}^{k-1}} \|f(g_1 ( |x| ))\|_* \ge \|x\|_{**}.$$
We already know that $\|g_1(|x|)\|_{**} \le \|x\|_{**}$ since $\|\cdot\|_{**}$ also has the extremal property, therefore $\|g_1(|x|)\|_{**} = \|x\|_{**}$ and we conclude that $\|\cdot \|_{**}$ has the Barabanov property. 

It remains to show that $\|x\|_{**} = 0$ if and only if $x = 0$.  Note that the norm used to define the joint spectral radius $\hat{r}(\mathcal{A})$ is arbitrary.  Therefore we could use the extremal norm $\|\cdot \|_*$ and observe that 
$$\hat{r}(\mathcal{A}) = 1 = \lim_{m \rightarrow \infty} \sup_{f \in \mathcal{A}^m} \|f\|_*^{1/m}.$$
Let $\beta_m := \sup_{f \in \mathcal{A}^m} \|f\|_*$ for each $m \in \N$. We know that $\beta_m \le 1$ for all $m$ since $\|f\|_* \le \hat{r}(\mathcal{A})^m = 1$ for all $f \in \mathcal{A}^m$.  As noted in the proof of Lemma \ref{lem:joint}, $\beta_{m+n} \le \beta_m \beta_n$ for all $m, n \in \N$.   Suppose that $\beta_m < 1$ for some $m \in \N$.  Then it follows that $\beta_{mk} \le \beta_m^k$ for all $k \in \N$, and therefore 
$$\hat{r}(\mathcal{A}) = \lim_{k \rightarrow \infty} \beta_{mk}^{1/mk} \le \beta_m^{1/m} < 1$$
which is a contradiction since $\hat{r}(\mathcal{A}) = 1$.  Therefore we conclude that $\beta_m = 1$ for all $m \in \N$.  Since $\mathcal{A}$ is compact, there exists $f \in \mathcal{A}^m$ such that $\|f\|_* = 1$ for every $m \in \N$. Let $\Sigma := \{x \in \R^n_{\ge 0} : \|x\|_* = 1 \}$ and for each $m \in \N$, let
$$\Sigma_m := \{ x \in \Sigma : \text{ there exists } f \in \mathcal{A}^m \text{ with } \|f(x)\|_*  = 1 \}.$$
The sets $\Sigma_m$ form a nested sequence of compact subsets of $\Sigma$, therefore, they have a nonempty intersection $\Sigma_\infty$.  For any $x \in \Sigma_\infty$, $\|x\|_{**} = 1$.  

Since we have already shown that $\|\cdot \|_{**}$ is a subadditive function, we have
$$\|x\|_{**} \le \sum_{i \in [n]} x_i \|e_i\|_{**}.$$ 
Therefore, at least one $e_i$ has $\|e_i\|_{**} > 0$. 

Consider any nonzero $x \in \R^n$.  Then at least one entry $j$ of $|x|$ is strictly positive, so there exists $\epsilon > 0$ such that $|x| \ge \epsilon e_j$.  Since $\mathcal{A}$ is an irreducible family, there is a $g \in \mathcal{A}^+$ such that $g(e_j)_i > \delta e_i$ for some $\delta > 0$ by Lemma \ref{lem:irred}.  Therefore, 
$$\|x\|_{**} \ge \epsilon \|e_j\|_{**} \ge \epsilon \|g(e_j)\|_{**} \ge \epsilon \delta \|e_i\|_{**} > 0.$$

\end{proof}

\section{Generalized and joint spectral subradii} \label{sec:subradii}

The quantities generalized spectral subradius  and joint spectral subradius for sets of matrices have been introduced in
\cite{gurvitz95} to present conditions for Markov asymptotic stability of discrete linear inclusion and are also related to the so-called mortality problem \cite{czornik05}. It was shown in \cite{gurvitz95} that these quantities are actually the same for finite sets of matrices and this results was extended to nonempty sets of matrices in \cite{czornik05}. In this section we extend these results to wedges (and build on the setting from \cite{MP17}, \cite{MP18}).

Let $X$ be real normed vector space and $K\subset X$ a non-zero wedge. Assume  
 $\mathcal{A}$ is a nonempty set of homogeneous and bounded maps on $K$ and let 
 $\mathcal{A}^m=\{f_1 \circ \cdots \circ f_m: f_i \in \mathcal{A}, i=1, \ldots , m\}$. 
 Let $\beta_m = \inf_{f \in \mathcal{A}^m} \|f\|$ and  $\gamma_m = \inf_{f \in \mathcal{A}^m} r(f)$.
  The joint spectral subradius $\hat{r}_* (\mathcal{A})$  and the generalized spectral subradius $\overline{r}_* (\mathcal{A})$ of $\mathcal{A}$ are defined as
  $$\hat{r}_* (\mathcal{A})= \inf _{m\in \mathbb{N}} \beta _m ^{1/m}$$
  and
    $$\overline{r}_* (\mathcal{A})= \inf _{m\in \mathbb{N}} \gamma _m ^{1/m},$$
 respectively. It obviously holds that $\overline{r}_* (\mathcal{A})\le \hat{r}_* (\mathcal{A})$ since $r(f) \le \|f\|$ holds for each homogeneous and bounded map $f$ on $K$.
 
 \begin{remark}{\rm Observe that our results below apply also to the interesting special case when $K=X$ is a Banach algebra. 
  }
 \end{remark}
 
 The following four theorems generalize \cite[Theorem 1-4]{czornik05} and are provable in very similar way as these results. The proofs are included for the sake of completeness.
 
 \begin{theorem}\label{cz1}
 Let $X$ be real normed vector space, $K\subset X$ a non-zero wedge and let  $\mathcal{A}$ be a nonempty set of homogeneous and bounded maps on $K$. Then $\overline{r}_* (\mathcal{A})= \hat{r}_* (\mathcal{A})$.
 \end{theorem}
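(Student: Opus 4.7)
The plan is to prove the nontrivial inequality $\hat{r}_*(\mathcal{A}) \le \overline{r}_*(\mathcal{A})$; the reverse inequality is already noted in the paragraph preceding the theorem.

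First I would verify that the sequence $\beta_m = \inf_{f \in \mathcal{A}^m}\|f\|$ is submultiplicative. Indeed, any element of $\mathcal{A}^{m+n}$ is of the form $f \circ g$ with $f \in \mathcal{A}^m$ and $g \in \mathcal{A}^n$, and $\|f \circ g\| \le \|f\| \cdot \|g\|$ for bounded homogeneous maps, so taking the infimum on both sides yields $\beta_{m+n} \le \beta_m \beta_n$. Fekete's lemma then gives
\[
\lim_{m \to \infty} \beta_m^{1/m} = \inf_{m \in \N} \beta_m^{1/m} = \hat{r}_*(\mathcal{A}).
\]

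Next, fix $m \in \N$ and any $f \in \mathcal{A}^m$. For every $k \in \N$ the iterate $f^k$ is an element of $\mathcal{A}^{mk}$, so $\|f^k\| \ge \beta_{mk}$. Using $r(f) = \lim_{k \to \infty}\|f^k\|^{1/k}$, I obtain
\[
r(f) \ge \lim_{k \to \infty} \beta_{mk}^{1/k} = \lim_{k \to \infty} \bigl(\beta_{mk}^{1/(mk)}\bigr)^m = \hat{r}_*(\mathcal{A})^m,
\]
where the last equality uses the convergence of $\beta_j^{1/j}$ along the subsequence $j = mk$ established in the previous paragraph. Taking the infimum over $f \in \mathcal{A}^m$ gives $\gamma_m \ge \hat{r}_*(\mathcal{A})^m$, so $\gamma_m^{1/m} \ge \hat{r}_*(\mathcal{A})$ for every $m$, and passing to the infimum over $m$ yields $\overline{r}_*(\mathcal{A}) \ge \hat{r}_*(\mathcal{A})$.

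There is no real obstacle here; the argument is essentially the Berger-type lower bound on $r(f)$ coming from the submultiplicativity of the joint spectral subradius, transposed verbatim from the matrix setting. The only point that requires a moment's care is the submultiplicativity of $\beta_m$, which is immediate in the paper's composition convention $\mathcal{A}^{m+n} = \mathcal{A}^m \mathcal{A}^n$ but would need to be checked (and is still true) for the opposite convention used in the statement of the theorem.
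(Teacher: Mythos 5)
Your argument is correct and is essentially the paper's own proof: both rest on the chain $\hat{r}_*(\mathcal{A}) \le \beta_{mk}^{1/(mk)} \le \|f^k\|^{1/(mk)} \to r(f)^{1/m}$ for $f \in \mathcal{A}^m$, with your version merely quantifying over all $f$ and taking an infimum where the paper picks an $\varepsilon$-optimal $f$. (Your appeal to Fekete's lemma is harmless but not needed for this step, since $\hat{r}_*(\mathcal{A}) = \inf_j \beta_j^{1/j} \le \beta_{mk}^{1/(mk)}$ already suffices; and as you note, the two composition-order conventions define the same set $\mathcal{A}^m$, so nothing changes.)
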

 \begin{proof} To prove the nontrivial inequality assume $\varepsilon >0$. By definition of $\overline{r}_* (\mathcal{A})$ there exists $N$ such that
 $$(\overline{r}_* (\mathcal{A}) + \varepsilon)^N > \gamma _N.$$
 Thus there exists $f_{1}, \ldots , f_N \in  \mathcal{A}$ such that 
  \begin{equation}
  \overline{r}_* (\mathcal{A}) + \varepsilon > r(f_{1} \circ \cdots \circ f_N)^{\frac{1}{N}}.
  \label{eps}
  \end{equation}
   It follows from definition of $\hat{r}_* (\mathcal{A})$ that for each $k \in \mathbb{N}$ we have
  $$\hat{r}_* (\mathcal{A}) \le \beta _{Nk} ^{\frac{1}{Nk}} \le \|(f_{1} \circ \cdots \circ f_N)^k\|^{\frac{1}{Nk}}.$$
  Letting $k$ to infinity it follows that 
  $$\hat{r}_* (\mathcal{A}) \le r(f_{1} \circ \cdots \circ f_N)^{\frac{1}{N}},$$
  which in combination with (\ref{eps}) implies that
  $\overline{r}_* (\mathcal{A})\ge \hat{r}_* (\mathcal{A})$, which completes the proof.
 \end{proof}
 
 Let us,  under the assumptions of Theorem \ref{cz1}, denote $r_* (\mathcal{A})= \overline{r}_* (\mathcal{A})= \hat{r}_* (\mathcal{A})$.
 
 \begin{theorem}\label{cz2}
 Let $X$ be real normed vector space, $K\subset X$ a non-zero wedge and let  $\mathcal{A}$ be a nonempty set of homogeneous and bounded maps on $K$. Then 
$$r_* (\mathcal{A})= \lim _{m\to \infty} \beta _m ^{\frac{1}{m}}= \liminf _{m\to \infty} \gamma _m ^{\frac{1}{m}}.$$
 \end{theorem}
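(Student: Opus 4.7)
The statement decomposes into two claims: (a) $\lim_{m\to\infty} \beta_m^{1/m}$ exists and equals $r_*(\mathcal{A})$; and (b) $\liminf_{m\to\infty} \gamma_m^{1/m} = r_*(\mathcal{A})$. The plan is to reduce (a) to Fekete's lemma by establishing submultiplicativity of $\beta_m$, and then to obtain (b) by sandwiching between (a) and the trivial lower bound coming from the definition of $\overline{r}_*(\mathcal{A})$.

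For (a), I would first observe that $\beta_m$ is submultiplicative. Indeed, for any $f \in \mathcal{A}^m$ and $g \in \mathcal{A}^n$, we have $f \circ g \in \mathcal{A}^{m+n}$ and $\|f \circ g\| \le \|f\|\,\|g\|$, so
$$\beta_{m+n} \;\le\; \inf_{f \in \mathcal{A}^m,\, g \in \mathcal{A}^n} \|f\|\,\|g\| \;=\; \beta_m \beta_n.$$
By Fekete's subadditivity lemma applied to $\log \beta_m$ (with the convention $\log 0 = -\infty$), the limit $\lim_{m\to\infty}\beta_m^{1/m}$ exists and equals $\inf_{m>0}\beta_m^{1/m}=\hat{r}_*(\mathcal{A})$, which equals $r_*(\mathcal{A})$ by Theorem~\ref{cz1}.

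For (b), the inequality $r(f) \le \|f\|$ for any bounded homogeneous map gives $\gamma_m \le \beta_m$, hence $\gamma_m^{1/m} \le \beta_m^{1/m}$ for every $m$. Taking $\liminf$ and using (a) yields $\liminf_{m\to\infty}\gamma_m^{1/m} \le r_*(\mathcal{A})$. Conversely, directly from the definition $\overline{r}_*(\mathcal{A}) = \inf_{m\in\mathbb{N}}\gamma_m^{1/m}$ combined with Theorem~\ref{cz1}, we have $\gamma_m^{1/m} \ge r_*(\mathcal{A})$ for every $m$, so $\liminf_{m\to\infty}\gamma_m^{1/m} \ge r_*(\mathcal{A})$. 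The two bounds give equality. There is no real obstacle in this argument: once submultiplicativity of $\beta_m$ is spotted, both parts fall out of Fekete's lemma together with Theorem~\ref{cz1}.
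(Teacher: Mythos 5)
Your proof is correct, but it takes a noticeably more streamlined route than the paper's. For the $\beta_m$ part, the paper also notes submultiplicativity and invokes Fekete's lemma for the existence of the limit, but then proves the identification $\lim_{m}\beta_m^{1/m}=\inf_m\beta_m^{1/m}$ by a separate argument (reducing via \cite[Lemma 1]{czornik05} to the case where the infimum is attained at some $N$ and then estimating $\beta_{Nk}$ along powers of a near-optimal composition); you simply read the identification off from the standard statement of Fekete's lemma applied to $\log\beta_m$, which is exactly what the paper itself does in Lemma~\ref{lem:joint}, so nothing is lost. The genuine divergence is in the $\gamma_m$ part: the paper proves $\liminf_m\gamma_m^{1/m}\le \overline{r}_*(\mathcal{A})$ directly, again via \cite[Lemma 1]{czornik05} and the identity $r\bigl((f_1\circ\cdots\circ f_N)^k\bigr)=r(f_1\circ\cdots\circ f_N)^k$ evaluated along the subsequence $m=Nk$, whereas you obtain the upper bound for free from $\gamma_m\le\beta_m$ together with part (a), and the lower bound from the definition of $\overline{r}_*(\mathcal{A})$ as an infimum plus Theorem~\ref{cz1}. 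Your sandwich avoids both the external lemma and the spectral-radius power identity, at the mild cost of making the $\gamma$-statement depend on the $\beta$-statement and on Theorem~\ref{cz1} (which is harmless, since $r_*(\mathcal{A})$ is only defined via Theorem~\ref{cz1} and there is no circularity). The paper's argument has the minor advantage of establishing $\overline{r}_*(\mathcal{A})=\liminf_m\gamma_m^{1/m}$ self-containedly, without passing through the norms $\beta_m$ at all.
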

 \begin{proof} Since $\beta _{m +n}\le\beta _{m}\beta _{n}$ the $\lim _{m\to \infty} \beta _m ^{\frac{1}{m}}$ exists by Fekete's lemma.  By definition it is clear that 
 $\hat{r}_* (\mathcal{A})\le \lim _{m\to \infty} \beta _m ^{\frac{1}{m}}$. To prove the reverse inequality we  may by \cite[Lemma 1]{czornik05} assume that there exists $N$ such that 
  $\hat{r}_* (\mathcal{A})=\beta _N ^{1/N}$.  Let $\varepsilon >0$. Then there exist
  $f_{1}, \ldots , f_N \in  \mathcal{A}$ such that
  $$\hat{r}_* (\mathcal{A}) + \varepsilon > \|f_{1}\circ \cdots \circ f_N\|^{\frac{1}{N}}.$$
  It follows that for each $k$ we have
  $$ \beta _{Nk} ^{\frac{1}{Nk}} \le \|(f_{1}\circ  \cdots \circ f_N)^k\|^{\frac{1}{Nk}} \le \|f_{1}\circ \cdots \circ f_N\|^{\frac{1}{N}} < \hat{r}_* (\mathcal{A}) + \varepsilon$$
  and so $ \lim _{m\to \infty} \beta _m ^{\frac{1}{m}}\le \hat{r}_* (\mathcal{A}) $.
  
  It remains to prove that $\overline{r}_* (\mathcal{A})=\liminf _{m\to \infty} \gamma _m ^{\frac{1}{m}}$. Clearly $\overline{r}_* (\mathcal{A}) \le \liminf _{m\to \infty} \gamma _m ^{\frac{1}{m}}$, so we need to prove the reverse inequality. Again by \cite[Lemma 1]{czornik05} we may assume that there exists $N$ such that 
  $\overline{r}_* (\mathcal{A})=\gamma _N ^{1/N}$.  Let $\varepsilon >0$. Then there exist
  $f_{1}, \ldots , f_N \in  \mathcal{A}$ such that for  each $k$ we have
  $$\overline{r}_* (\mathcal{A}) + \varepsilon > r(f_{1}\circ \cdots \circ f_N)^{\frac{1}{N}}=r((f_{1}\circ \cdots \circ f_N)^k)^{\frac{1}{Nk}}\ge \gamma _{Nk} ^{\frac{1}{Nk}}.$$
  This implies $\overline{r}_* (\mathcal{A}) \le \liminf _{m\to \infty} \gamma _m ^{\frac{1}{m}}$, which completes the proof.
 \end{proof}
  Let $X$ be real normed vector space, $K\subset X$ a non-zero wedge and let  $\mathcal{A}$ be a nonempty set of homogeneous and bounded maps on $K$.
 Let $\mathcal{D}$ be the set of sequences $d=(f_1, f_2, f_3, \ldots)$  of maps from $\mathcal{A}$. For a fixed $d\in \mathcal{D}$ define a homogeneous and bounded map $F_m ^d$ on $K$ by
 $F_m ^d = f_m\circ f_{m-1} \circ \cdots \circ f_1$
 and $r (d)=\liminf _{m \to \infty} \|F_m ^d\|^{\frac{1}{m}}$. As pointed out in  \cite{czornik05} the quantity $ \log r (d)$ is in control theory literature (at least in the linear case) known as the greatest Lyapunov exponent of the time varying system $x(m+1)=f_m (x(m))$ where $f_m \in \mathcal{A}$. (Refer here to Mason, Gursoy for the tropical case and to the books Max plus at work and Bacelli et al)
 
\begin{theorem}\label{cz3}
 Let $X$ be real normed vector space, $K\subset X$ a non-zero wedge and let  $\mathcal{A}$ be a nonempty set of homogeneous and bounded maps on $K$. Then 
 $r_* (\mathcal{A})=\inf _{d\in \mathcal{D}} r(d)$.
 \end{theorem}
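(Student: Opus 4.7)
Set $r := r_*(\mathcal{A}) = \hat{r}_*(\mathcal{A})$. The plan is to prove the two inequalities separately. The direction $r \le \inf_{d \in \mathcal{D}} r(d)$ is essentially free: for any $d = (f_1, f_2, \ldots) \in \mathcal{D}$ and any $m$, the map $F_m^d = f_m \circ \cdots \circ f_1$ lies in $\mathcal{A}^m$, so $\|F_m^d\| \ge \beta_m$ and $\|F_m^d\|^{1/m} \ge \beta_m^{1/m}$. Passing to $\liminf$ and invoking Theorem \ref{cz2} gives $r(d) \ge \lim_m \beta_m^{1/m} = r$.

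For the non-trivial direction $\inf_d r(d) \le r$, I will construct a single sequence $d \in \mathcal{D}$ with $r(d) \le r$ by concatenating near-optimal finite blocks of rapidly increasing length. Using Theorem \ref{cz2} ($\lim_m \beta_m^{1/m} = r$), inductively choose integers $m_k$ and maps $g_k \in \mathcal{A}^{m_k}$ such that $\|g_k\|^{1/m_k} < r + 1/k$ and also $m_k > k \cdot N_{k-1}$, where $N_j := m_1 + \cdots + m_j$ (with $N_0 = 0$). Write each $g_k = h_{k,m_k} \circ \cdots \circ h_{k,1}$ with $h_{k,i} \in \mathcal{A}$, and let $d$ be the concatenation $(h_{1,1}, \ldots, h_{1,m_1}, h_{2,1}, \ldots, h_{2,m_2}, \ldots)$. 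By construction, $F_{N_k}^d = g_k \circ g_{k-1} \circ \cdots \circ g_1$, and submultiplicativity of the norm for bounded homogeneous maps gives
$$\|F_{N_k}^d\|^{1/N_k} \;\le\; \prod_{j=1}^k \|g_j\|^{1/N_k} \;<\; \prod_{j=1}^k (r + 1/j)^{m_j/N_k}.$$

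Assuming $r > 0$, taking logarithms turns the right-hand side into a convex combination of $\log(r + 1/j)$ with weights $w_j = m_j/N_k$. Writing $\delta_j := \log(r + 1/j) - \log r > 0$, the growth condition $m_k > k N_{k-1}$ gives $N_{k-1}/N_k < 1/(k+1)$, so $\sum_{j=1}^{k-1} w_j \delta_j \le (N_{k-1}/N_k)\delta_1 \to 0$ (using that $\delta_j$ is decreasing), while $w_k \delta_k \le \delta_k \to 0$. Hence $\sum_{j=1}^k w_j \log(r+1/j) \to \log r$, so $\|F_{N_k}^d\|^{1/N_k} \to r$ along this subsequence, giving $r(d) = \liminf_m \|F_m^d\|^{1/m} \le r$. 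The case $r = 0$ is treated directly: the bound $(1/k)^{m_k/N_k} \le (1/k)^{k/(k+1)} \to 0$, combined with $(1/j)^{m_j/N_k} \le 1$ for $j < k$, forces $\|F_{N_k}^d\|^{1/N_k} \to 0$.

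The main obstacle is getting the weighted average to converge to $\log r$ without contamination from the earliest, least-optimal blocks $g_1, \ldots, g_{k-1}$. This is exactly what the fast growth $m_k > k N_{k-1}$ buys: it shifts essentially all of the weight onto the final near-optimal block $g_k$, where $\log\|g_k\|^{1/m_k} < \log(r + 1/k)$. With that in place, everything else reduces to a routine estimate and the standard fact that $\liminf$ along a subsequence bounds the full $\liminf$.
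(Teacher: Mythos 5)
Your proof is correct, but for the nontrivial inequality $\inf_{d\in\mathcal{D}} r(d)\le r_*(\mathcal{A})$ you take a genuinely different route from the paper. The paper's argument is much shorter: for an arbitrary finite word $f_{i_1},\dots,f_{i_N}\in\mathcal{A}$ it forms the \emph{periodic} sequence $d_0$ repeating that word, observes along the subsequence $m=Nk$ that $F_{Nk}^{d_0}=(f_{i_N}\circ\cdots\circ f_{i_1})^k$, and concludes $r(d_0)\le \|f_{i_N}\circ\cdots\circ f_{i_1}\|^{1/N}$ by submultiplicativity; taking the infimum over all finite words then gives $\inf_d r(d)\le \hat{r}_*(\mathcal{A})$ directly from the definition, with a different periodic $d_0$ for each $\varepsilon$. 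You instead build a \emph{single} sequence $d$ by concatenating near-optimal blocks $g_k$ of superlinearly growing lengths $m_k>kN_{k-1}$, and show that the weight $N_{k-1}/N_k<1/(k+1)$ of the early blocks vanishes, so $\|F_{N_k}^d\|^{1/N_k}\to r_*(\mathcal{A})$ along the subsequence $N_k$. Your bookkeeping checks out (the inductive choice of $m_k$ is available because $\beta_m^{1/m}\to r$ by Theorem \ref{cz2}, the convex-combination estimate with decreasing $\delta_j$ is right, and the separate $r=0$ case via $(1/k)^{k/(k+1)}\to 0$ is handled correctly). What your extra work buys is a strictly stronger conclusion: combined with the easy inequality $r(d)\ge r_*(\mathcal{A})$, your $d$ satisfies $r(d)=r_*(\mathcal{A})$, i.e.\ the infimum over $\mathcal{D}$ is attained, which the paper's periodic construction does not give (it only approximates the infimum to within $\varepsilon$). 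If you only want the equality stated in the theorem, the periodic-word argument is the more economical choice.
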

 \begin{proof} Since   $\|F_m ^d\|^{\frac{1}{m}}\ge  \beta _m ^{\frac{1}{m}}$ for each $m$ and each $d\in \mathcal{D}$, it follows that $r_* (\mathcal{A})\le\inf _{d\in \mathcal{D}} r(d)$. To prove the reverse inequality fix $N\in \mathbb{N}$ and $f_{i_1}, \ldots , f_{i_N} \in  \mathcal{A}$. Define a sequence $d_0 \in \mathcal{D}$ by starting with $f_{i_1}, \ldots , f_{i_N}$ and then continuing periodically.  Then
 $$\inf _{d\in \mathcal{D}} r(d) \le r(d_0) \le \liminf _{m \to \infty}\|(f_{i_N}\circ \cdots  \circ f_{i_1})^m\|^{\frac{1}{Nm}}\le\|f_{i_N}\circ \cdots  \circ f_{i_1}\|^{\frac{1}{N}}.$$
 Since $N\in \mathbb{N}$ and $f_{i_1}, \ldots , f_{i_N} \in  \mathcal{A}$ were arbitrary it follows that $r_* (\mathcal{A})\ge\inf _{d\in \mathcal{D}} r(d)$, which completes the proof.
 \end{proof}
  
\subsection{Selectable stability of discrete inclusions}
 
Let $DI(\mathcal{A})$ be a set of all sequences $\{x(m):m \in \mathbb{N}\} \subset K$ such that  
 $x(m+1)=f_m (x(m))$ for some $f_m \in \mathcal{A}$. We say that  $DI(\mathcal{A})$ is \emph{selectably stable}
 if and only if there exists a sequence $d\in \mathcal{D}$ such that $\lim _{m\to \infty} \|F_m ^d\|=0$.
  
 \begin{theorem}\label{cz4}
 Let $X$ be real normed vector space, $K\subset X$ a non-zero wedge and let  $\mathcal{A}$ be a nonempty set of homogeneous and bounded maps on $K$. Then  $DI(\mathcal{A})$ is selectably stable if and only if
 $r_* (\mathcal{A})<1$.
 \end{theorem}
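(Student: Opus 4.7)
The plan is to prove both implications directly, using Theorem \ref{cz2} (which identifies $r_*(\mathcal{A})$ with $\lim_{m\to\infty}\beta_m^{1/m}$) only for the nontrivial direction.

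For the easier direction ($\Rightarrow$), assume $DI(\mathcal{A})$ is selectably stable, so there is $d = (f_1, f_2, \ldots) \in \mathcal{D}$ with $\|F_m^d\| \to 0$. Since $F_m^d = f_m \circ f_{m-1} \circ \cdots \circ f_1$ is an element of $\mathcal{A}^m$ (the composition set is invariant under relabeling), we have $\beta_m \le \|F_m^d\|$ for every $m$. Choose $m$ large enough that $\|F_m^d\| < 1$; then $\beta_m < 1$, so $r_*(\mathcal{A}) \le \beta_m^{1/m} < 1$.

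For the nontrivial direction ($\Leftarrow$), suppose $r_*(\mathcal{A}) < 1$ and pick $q$ with $r_*(\mathcal{A}) < q < 1$. By Theorem \ref{cz2}, $\beta_m^{1/m} \to r_*(\mathcal{A})$, so there exist $N \in \mathbb{N}$ and $f_1, \ldots, f_N \in \mathcal{A}$ such that
$$h := f_N \circ f_{N-1} \circ \cdots \circ f_1 \quad\text{satisfies}\quad \|h\| < q^N.$$
I construct $d \in \mathcal{D}$ by periodic extension of $(f_1, \ldots, f_N)$, i.e.\ $d_{kN + j} := f_j$ for $k \ge 0$ and $1 \le j \le N$. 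Then $F_{kN}^d = h^k$, so by submultiplicativity $\|F_{kN}^d\| \le \|h\|^k < q^{Nk}$. For intermediate $m = kN + r$ with $0 \le r < N$, the periodicity gives $F_m^d = (f_r \circ \cdots \circ f_1) \circ h^k$, so
$$\|F_m^d\| \le C \|h\|^k \quad\text{where}\quad C := \max_{0 \le r < N} \|f_r \circ \cdots \circ f_1\|,$$
with the convention that the $r = 0$ term is $1$. Here $C$ is finite because each $f_j$ is bounded and there are only finitely many values of $r$ to consider. Since $\|h\| < 1$, letting $m \to \infty$ (which forces $k \to \infty$) yields $\|F_m^d\| \to 0$, so $DI(\mathcal{A})$ is selectably stable.

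The only subtlety is bookkeeping: matching the composition order in the definition of $F_m^d$ with the composition order in the definition of $\mathcal{A}^m$, and handling $m$ that are not multiples of $N$ via a uniform bound $C$. Conceptually this is straightforward, and no boundedness or continuity hypothesis beyond what is already in the hypotheses of the theorem is needed.
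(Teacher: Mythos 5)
Your proof is correct and follows essentially the same route as the paper: deduce $\beta_N^{1/N}<1$ from a single small $\|F_N^d\|$ for the forward direction, and extend a witnessing finite composition periodically for the converse. The only difference is that you spell out the bookkeeping for indices $m$ not divisible by $N$ (the uniform constant $C$ and submultiplicativity), which the paper leaves implicit in ``It follows that $\lim_{m\to\infty}\|F_m^{d_0}\|=0$.''
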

 \begin{proof} If $DI(\mathcal{A})$ is selectably stable then there exists $d=(f_1, f_2, f_3, \ldots)\in \mathcal{D}$ such that $\lim _{m\to \infty} \|F_m ^d\|=0$. So there exists $N$ such that
 $\|f_N \circ \cdots \circ f_1\|<\frac{1}{2}$. Thus 
 $r_* (\mathcal{A})<(\frac{1}{2})^{\frac{1}{N}}<1$.
 
 Conversely, assume that $r_* (\mathcal{A})<1$. Then for a fixed $q \in(r_* (\mathcal{A}),1)$ there exists $N\in \mathbb{N}$ and $f_{i_1}, \ldots , f_{i_N} \in  \mathcal{A}$ such that $\|f_{i_N}\circ \cdots  \circ f_{i_1}\|<q$. Again let $d_0 \in \mathcal{D}$  be a sequence  starting with $f_{i_1}, \ldots , f_{i_N}$ and then continuing periodically. It follows that $\lim _{m\to \infty} \|F_m ^{d_0}\|=0$.
 \end{proof}

\begin{remark}{\rm The last four theorems remain true (with the same proof) if $K\subset X$ is a non-zero set such that $tK \subset K $ for all $t\ge 0 $.
Thus they can be applied to  homogeneous and bounded maps defined on max cones studied in \cite{MP17} and \cite{MP18} and thus also to the tropical case studied in e.g. \cite{MalletParetNussbaum02, Lur06, Peperko08, LemmensNussbaum, GM11, BCOQ92, Bu10}.
  }
 \end{remark}
 
 \noindent {\bf Acknowledgements.}
The authors acknowledge  mobility support by the Slovenian Research and Innovation Agency (Slovenia-USA bilateral project BI-US/22-24-046).

 The second author acknowledges a partial support of  the Slovenian Research and Innovation Agency (grants P1-0222). 

\bibliography{BW}

\begin{thebibliography}{10}

\bibitem{AkianGaubertMarchesini25}
M.~Akian, S.~Gaubert, and L.~Marchesini.
\newblock The competive spectral radius of families of nonexpansive mappings,
  2024.
\newblock Preprint, \url{https://arxiv.org/abs/2410.21097}.

\bibitem{BergerWang92}
M.~A. Berger and Y.~Wang.
\newblock Bounded semigroups of matrices.
\newblock {\em Linear Algebra Appl.}, 166:21--27, 1992.

\bibitem{BurbanksNussbaumSparrow03}
A.~D. Burbanks, R.~D. Nussbaum, and C.~T. Sparrow.
\newblock Extension of order-preserving maps on a cone.
\newblock {\em Proc. Roy. Soc. Edinburgh Sect. A}, 133(1):35--59, 2003.

\bibitem{Bu10}
P.~Butkovi\v{c}.
\newblock {\em Max-linear systems: theory and algorithms}.
\newblock Springer-Verlag, London, 2010.

\bibitem{czornik05}
A.~Czornik.
\newblock On the generalized spectral subradius.
\newblock {\em Linear Algebra Appl.}, 407:242--248, 2005.

\bibitem{DaubechiesLagarias92}
I.~Daubechies and J.~C. Lagarias.
\newblock Sets of matrices all infinite products of which converge.
\newblock {\em Linear Algebra Appl.}, 161:227--263, 1992.

\bibitem{DeiddaGuglielmiTudisco25}
P.~Deidda, N.~Guglielmi, and F.~Tudisco.
\newblock Nonlinear joint spectral radius, 2025.
\newblock Preprint, \url{https://arxiv.org/abs/2507.11314}.

\bibitem{Elsner95}
L.~Elsner.
\newblock The generalized spectral-radius theorem: an analytic-geometric proof.
\newblock {\em Linear Algebra Appl.}, 220:151--159, 1995.

\bibitem{BCOQ92}
G.-J.~Olsder F.L.~Baccelli, G.~Cohen and J.-P.Quadrat.
\newblock {\em Synchronization and Linearity}.
\newblock John Wiley,Chichester, New York, 1992.

\bibitem{GMW18}
N.~Guglielmi, O.~Mason, and F.~Wirth.
\newblock Barabanov norms, {L}ipschitz continuity and monotonicity for the max
  algebraic joint spectral radius.
\newblock {\em Linear Algebra Appl.}, 550:37--58, 2018.

\bibitem{GM11}
B.B. Gursoy and O.~Mason.
\newblock $p^1 _{\mathrm{max}}$ and $s_{\mathrm{max}}$ properties and
  asymptotic stability in the max algebra.
\newblock {\em Linear Algebra Appl.}, 435:1008--1018, 2011.

\bibitem{gurvitz95}
L.~Gurvits.
\newblock Stability of discrete linear inclusion.
\newblock {\em Linear Algebra Appl.}, 231:47--85, 1995.

\bibitem{LemmensNussbaum13}
B.~Lemmens and R.~Nussbaum.
\newblock Continuity of the cone spectral radius.
\newblock {\em Proc. Amer. Math. Soc.}, 141(8):2741--2754, 2013.

\bibitem{LemmensNussbaum}
B.~Lemmens and R.~D. Nussbaum.
\newblock {\em Nonlinear {P}erron-{F}robenius theory}, volume 189 of {\em
  Cambridge Tracts in Mathematics}.
\newblock Cambridge University Press, Cambridge, 2012.

\bibitem{Lur06}
Y.-Y. Lur.
\newblock A max version of the generalized spectral radius theorem.
\newblock {\em Linear Algebra Appl.}, 418(1):336--346, 2006.

\bibitem{LurYang09}
Y.-Y. Lur and W.-W. Yang.
\newblock Continuity of the generalized spectral radius in max algebra.
\newblock {\em Linear Algebra Appl.}, 430(8-9):2301--2311, 2009.

\bibitem{MalletParetNussbaum02}
J.~Mallet-Paret and R.~D. Nussbaum.
\newblock Eigenvalues for a class of homogeneous cone maps arising from
  max-plus operators.
\newblock {\em Discrete Contin. Dyn. Syst.}, 8(3):519--562, 2002.

\bibitem{MP17}
V.~M{\"{u}}ller and A.~Peperko.
\newblock On the {B}onsall cone spectral radius and the approximate point
  spectrum.
\newblock {\em Discrete Contin. Dyn. Syst.}, 37(10):5337--5354, 2017.

\bibitem{MP18}
V.~M{\"{u}}ller and A.~Peperko.
\newblock Lower spectral radius and spectral mapping theorem for suprema
  preserving mappings.
\newblock {\em Discrete Contin. Dyn. Syst.}, 38(8):4117--4132, 2018.

\bibitem{Peperko08}
A.~Peperko.
\newblock On the max version of the generalized spectral radius theorem.
\newblock {\em Linear Algebra Appl.}, 428(10):2312--2318, 2008.

\bibitem{Peperko11}
A.~Peperko.
\newblock On the continuity of the generalized spectral radius in max algebra.
\newblock {\em Linear Algebra Appl.}, 435(4):902--907, 2011.

\bibitem{Rockafellar}
R.~T. Rockafellar.
\newblock {\em Convex analysis}.
\newblock Princeton Mathematical Series, No. 28. Princeton University Press,
  Princeton, N.J., 1970.

\bibitem{RotaStrang60}
G.-C. Rota and G.~Strang.
\newblock A note on the joint spectral radius.
\newblock {\em Indag. Math.}, 22:379--381, 1960.
\newblock Nederl. Akad. Wetensch. Proc. Ser. A {\bf 63}.

\end{thebibliography}
\bibliographystyle{plain}

\end{document}